\documentclass[a4paper,12pt,reqno,english]{amsart}
\usepackage[utf8]{inputenc}
\usepackage[T1]{fontenc}
\usepackage{babel}

\usepackage{aliascnt}
\usepackage[bookmarks=false,colorlinks,citecolor=blue,urlcolor=blue]{hyperref}
\usepackage[titletoc,toc,title]{appendix}
\usepackage{tikz}

\setlength{\topmargin}{-0.43cm} 
\setlength{\textheight}{23.3cm} 
\setlength{\oddsidemargin}{-0.39cm}
\setlength{\evensidemargin}{-0.39cm}
\setlength{\textwidth}{16.7cm}

\numberwithin{equation}{section}
\numberwithin{figure}{section}

\theoremstyle{plain}
  \newtheorem{theorem}{Theorem}

\theoremstyle{plain}
  \newaliascnt{proposition}{theorem}
  \newtheorem{proposition}[proposition]{Proposition}
  \aliascntresetthe{proposition}

\theoremstyle{plain}
  \newaliascnt{lemma}{theorem}
  \newtheorem{lemma}[lemma]{Lemma}
  \aliascntresetthe{lemma}

\theoremstyle{plain}
  \newaliascnt{corollary}{theorem}
  \newtheorem{corollary}[corollary]{Corollary}
  \aliascntresetthe{corollary}

\theoremstyle{definition}
  \newtheorem{definition}{Definition}

\theoremstyle{definition}
  \newtheorem{remark}{Remark}

\theoremstyle{definition}
  \newtheorem{notation}{Notation}

\date{}


\newcommand{\res}[1]{
\underset{#1}{\mathrm{Res}}
}

\newcommand{\sph}{\mu}

\newcommand{\modular}{\Delta_{h}}

\begin{document}

\title[An analogue of Weyl's law for quantized flag manifolds]{An analogue of Weyl's law for quantized irreducible generalized flag manifolds}

\author{Marco Matassa}%

\email{marco.matassa@gmail.com}

\address{Department of Mathematics, University of Oslo, P.B. 1053 Blindern, 0316 Oslo, Norway.}


\begin{abstract}
We prove an analogue of Weyl's law for quantized irreducible generalized flag manifolds.
By this we mean defining a zeta function, similarly to the classical setting, and showing that it satisfies the following two properties:
as a functional on the quantized algebra it is proportional to the Haar state; its first singularity coincides with the classical dimension. The relevant formulae are given for the more general case of compact quantum groups.
\end{abstract}

\maketitle

\section{Introduction}

The aim of this paper is to prove an analogue of Weyl's law for quantized irreducible generalized flag manifolds. We will be more precise about what we mean by this in a moment. The discussion of the problem will be of a general nature, with flag manifolds making an appearance only at the end of the paper.
To see what this should entail, we start by describing the classical results that we wish to generalize to the quantum setting.
In 1911 Weyl proved the following result for the asymptotic behaviour of the eigenvalues of the Laplace-Beltrami operator \cite{wey11}: let $M \subset \mathbb{R}^{d}$ be a bounded domain, here for simplicity without boundary, and let $N(\lambda)$ be the number of eigenvalues of the Laplace-Beltrami operator (counted with multiplicities) which are less or equal than $\lambda$; then we have the following equality
\begin{equation*}
\lim_{\lambda \to \infty} \frac{N(\lambda)}{\lambda^{d/2}} = \frac{V_{d}}{(2 \pi)^{d}} \textrm{vol}(M).
\end{equation*}
Here $V_{d}$ denotes the volume of the unit ball in $\mathbb{R}^{d}$.
It can be regarded as one of the first results in spectral geometry, since it allows to recover the dimension and the volume of $M$ from the knowledge of the spectrum of a certain operator.

This result can be reformulated and slightly generalized as follows.
Let $M$ be a closed Riemannian manifold of dimension $d$. Denote by $\Delta$ the Laplace-Beltrami operator defined with respect to a fixed metric. Then for any $f \in C^{\infty}(M)$ we have an equality, which in the following we will refer to as the \emph{residue formula}, given by
\begin{equation}\label{eq:res-form}
\res{z = d} \ \mathrm{Tr}(f \Delta^{-z/2}) = \frac{\Omega_{d}}{(2 \pi)^{d}} \int f \mathrm{dvol}.
\end{equation}
Here $\Omega_{d}$ denotes the volume of the $(d - 1)$-sphere. In this formulation the zeta function $\zeta_{\Delta, f}(z) =  \mathrm{Tr}(f \Delta^{-z/2})$ appears in place of the counting function $N(\lambda)$.

One way to prove the residue formula is by relating the residue at $z = d$ of the zeta function $\zeta_{\Delta, f}(z)$ to the Wodzicki residue. The latter can be easily computed using the principal symbol of $\Delta$.
When $f$ is equal to one we obtain a reformulation of Weyl's law. More generally, the residue formula shows that we can define integration in a purely spectral way.
Indeed this is one way of defining non-commutative integration for spectral triples \cite{con-book}.
The study of spectral triples gives one of the main motivation for this paper.

We make two remarks on Weyl's law, incarnated as the residue formula, from a modern perspective.
The first one is on the appearance of the dimension of $M$, which is not special to $\Delta$.
Indeed suppose that $P$ is an elliptic pseudo-differential operator of order $n > 0$, which extends to a positive self-adjoint operator.
Then it can be proven that $P^{-z}$ is trace-class for $\mathrm{Re}(z) > d/n$ and $\mathrm{Tr}(f P^{-z})$ is holomorphic on this open half-plane.
Therefore with $P^{-z/n}$ we would obtain a similar result.
The second remark is that, on the other hand, the proportionality of the residue with the integral of $f$ does not hold for any such operator.
Indeed it follows from the fact that the principal symbol of $\Delta$ contains the metric.

The residue formula holds in particular in the case of a compact Lie group $G$, with Lie algebra $\mathfrak{g}$.
Recall that the universal enveloping algebra $U(\mathfrak{g})$ can be identified with the algebra of left-invariant differential operators on $G$, with the center of $U(\mathfrak{g})$ corresponding to the algebra of bi-invariant differential operators.
Under this identification, the Laplace-Beltrami operator corresponds to the quadratic Casimir.
Therefore in this case the residue formula can be proven directly by making use of the representation theory of $\mathfrak{g}$.

This is the setting that we wish to consider.
Indeed, for universal enveloping algebras of semisimple Lie algebras, there exists a quantization procedure which endows them with a non-trivial Hopf algebra structure.
Similarly for algebras of representative functions on the corresponding Lie groups.
These objects, or more properly their completions as $C^{*}$-algebras, are referred to as quantum groups, for a reference see \cite{klsc}.
This quantization procedure can be extended also to homogeneous spaces, see \cite{stdi99} and references therein.

Our aim is then to investigate whether an analogue of the residue formula \eqref{eq:res-form} holds in the case of compact quantum groups and their homogeneous spaces.
There are two properties of this formula that we would like to mantain:
\begin{enumerate}
\item the proportionality between the (residue of the) trace of an operator and the integral,
\item the appearance of the dimension of the space as the first singularity of the zeta function.
\end{enumerate}
In the quantum setting some of the classical ingredients have to be replaced by their appropriate counterparts.
For example, the Haar integral of a compact Lie group has to be replaced by the Haar state, which satisfies analogues of the classical invariance conditions.
It is less clear how the Laplace-Beltrami operator should be replaced. For this reason we will only impose general properties at first, like being central and positive.
A general operator satisfying these properties will be denoted by $\mathcal{C}$. Later on we will be more specific about $\mathcal{C}$.

It turns out that also the trace must be replaced by the weight $\mathrm{Tr}(\cdot \modular)$, where $\modular$ is the modular operator of the Haar state. With this extra ingredient we define the zeta function $\zeta_{\mathcal{C}, a}(z) = \mathrm{Tr}(a \mathcal{C}^{-z/2} \modular)$, where $a$ is an element of the quantized algebra. Notice how this definition parallels the one appearing in the residue formula, except for the presence of $\modular$, which is trivial in the classical setting.
We simply write $\zeta_{\mathcal{C}}(z)$ in the case $a = 1$.
Then in this setting our first requirement, that of proportionality, can be recasted as an equality of certain zeta functions.
Indeed we will show that $\zeta_{\mathcal{C}, a}(z) = \zeta_{\mathcal{C}}(z) h(a)$, where $h$ is the Haar state, which expresses the sought after proportionality.
This equality will be valid for $\mathrm{Re}(z) > p$, where the number $p$ is called the \emph{spectral dimension}.
In the classical case this number coincides with the dimension of the manifold.
In the quantum case this number will depend non-trivially on the choice of the operator $\mathcal{C}$.
Therefore the main part of the paper will be devoted to the computation of the spectral dimension, for certain choices of the operator $\mathcal{C}$.

Before getting into that, let us briefly recall how this computation proceeds in the classical case.
In general there are two contributions: the eigenvalues of the Laplace-Beltrami operator and their multiplicities.
In the compact Lie group case, after making the identification with the quadratic Casimir, the eigenvalues and the multiplicities can be expressed in terms of the representation theory of the corresponding Lie algebra.
Indeed, upon using the Peter-Weyl decomposition, we see that the eigenvalues are the values of the Casimir in irreducible representations, while the multiplicities are the dimensions (squared) of these spaces.
These numbers can be computed by taking inner product of weights: the value of the Casimir in a irreducible representation of highest weight $\Lambda$ is given by $(\Lambda, \Lambda + 2 \rho)$, where $\rho$ is the half-sum of the positive roots; similarly, the dimensions of the representations can be computed via the Weyl dimension formula, which is expressed in terms of inner products of weights.

For compact quantum groups we still have an analogue of the Peter-Weyl decomposition. Then the values that $\mathcal{C}$ takes in irreducible representations will appear in the computation. Unfortunately, we can not say much about these values without knowing $\mathcal{C}$ more precisely.
We will also see that, due to the presence of the modular operator $\modular$, the dimensions of the irreducible representations are replaced by their quantum dimensions, a familiar notion in quantum group theory.
These values, on the other hand, can be computed by a quantum analogue of the Weyl dimension formula.
We will prove a simple asymptotic formula for the quantum dimension, given by $\dim_{q}(\Lambda) \sim q^{-(\Lambda, 2 \rho)}$. Here we write a dominant weight $\Lambda = \sum_{k = 1}^{r} n_{k} \omega_{k}$ in terms of the fundamental weights $\omega_{k}$, with the formula being valid for $n_{k}$ large.
The details of this asymptotic relation will be explained later.

To proceed we need to make some choice for the operator $\mathcal{C}$.
We will follow the construction of central elements for (quasi-triangular) Hopf algebras given in \cite{ligo92}. It is based on the universal R-matrix, which is one of the characteristic objects associated to a quantum group.
In this way we obtain a one-parameter family of central elements $\mathcal{C}_{t}$, which, in the classical limit, reduce to the quadratic Casimir up to rescaling.
The dependence on $t$ is introduced here to keep into account the non-linearity in the definition of the $q$-numbers.
This construction also depends on the choice of a fixed representation, which we will denote by $\Lambda_{0}$.

As a consequence of this construction, the value of $\mathcal{C}_{t}$ in some irreducible representation will be expressible in terms of inner products of weights. We denote the value in a representation of highest weight $\Lambda$ by $\chi_{\Lambda}(\mathcal{C}_{t})$. Then we will obtain the asymptotic formula $\chi_{\Lambda}(\mathcal{C}_{t}) \sim q^{- 4 t (\Lambda, \Lambda_{0})}$, where $\Lambda_{0}$ is the highest weight of the fixed representation.
Notice the non-linear dependence on the parameter $t$.
Using this asymptotic formula, together with the one for the quantum dimension, we will prove a general formula that computes the spectral dimension in terms of representation theory.
This result holds for the quantization of a compact simple Lie group. We will also show, in passing, a result regarding product spaces: in this case the spectral dimension is given by the sum of the spectral dimensions of the factors.

Now, in order to give explicit results, the only thing which is left to do is to compute some inner products of weights. This operation, though tedious, is completely classical. The result is somewhat disappointing: the spectral dimension does not seem to be related in any simple way to the classical one.
We should point out that, in the light of other results on various notions of dimension for compact quantum groups, this outcome is not too surprising.

On the other hand, quantized irreducible generalized flag manifolds are more well-behaved in this sense. Important results in this respect are given in the papers \cite{flagcalc1} and \cite{flagcalc2}: here it is shown that these spaces admit a canonical $q$-analogue of the de Rham complex, with the homogenous components having the same dimensions as in the classical case. Moreover, these differential calculi can be implemented in the sense of spectral triples by commutators with Dirac operators, see \cite{qflag} and \cite{qflag2}.
This is in contrast with the complexity of the differential calculi defined for compact quantum groups.

The results mentioned above provide the main motivation for restricting to the class of quantized irreducible generalized flag manifolds.
Indeed, as we will show, in this case our results take a particularly nice form.
To summarize these, we briefly recall some notions related to this class of spaces, which coincides with the class of compact irreducible Hermitian symmetric spaces. In this case the Peter-Weyl decomposition is multiplicity-free and the weights appearing in this decomposition are called dominant spherical weights. We will write such weights as $\Lambda_{S} = \sum_{k = 1}^{n} m_{k} \sph_{k}$, where $\sph_{k}$ are the fundamental spherical weights.

We start by giving the relevant results for the the inner products needed for the computation of the spectral dimension.
The first one is $(\Lambda_{S}, 2 \rho)$, which comes from the quantum dimension. We have the following interesting property: if we write $(\Lambda_{S}, 2\rho) = \sum_{k = 1}^{n} a_{k} m_{k}$ for some coefficients $a_{k}$, then we have $\frac{2}{(\theta, \theta)} \max \{ a_{k} \} = d$, where $\theta$ is the highest root and $d$ is the dimension of the space in consideration.
The second one is $(\Lambda_{S}, \Lambda_{0})$, which comes from the eigenvalues of $\mathcal{C}_{t}$. In this case we need to choose a representation $\Lambda_{0}$. We will consider both the fundamental and the adjoint representations. However the adjoint one turns out to be more appropriate for our purposes. In this case the highest weight is given by the highest root $\theta$ and we have the simple result $(\Lambda_{S}, \theta) = (\theta, \theta) \sum_{k = 1}^{n} m_{k}$.

Using these results, we can compute the spectral dimension $p$. The outcome of the computation is that $p = \frac{d}{4 t}$, that is the spectral dimension is proportional to the classical one.
This holds for all quantized irreducible generalized flag manifolds and for any value of $t$. In particular we are free to choose $t = 1/4$ in such a way that the two dimensions coincide. In this way we obtain the promised analogue of Weyl's law, which is formulated in \autoref{theorem:weyl-law}. Other values of $t$ might also be of interest, but we will not explore this here.

The structure of the paper is as follows. In \autoref{section:quantum-groups} we summarize basic results on compact quantum groups and their homogeneous spaces. In \autoref{section:integration} we describe our setting, define the relevant zeta functions and show the proportionality with the Haar state. We also give an asymptotic formula for the quantum dimension.
In \autoref{section:central} we discuss the choice of a central element. We introduce a one-parameter family of such elements and give an asymptotic formula for their eigenvalues.
In \autoref{section:spec-dim} we prove some general results on the spectral dimension.
In \autoref{section:comp-groups} we perform some computations of inner products for compact quantum groups.
In \autoref{section:flag-man} we recall some results on irreducible generalized flag manifolds and compute the corresponding quantum dimension.
Finally in \autoref{section:comp-flag} we finish the computations for this class of spaces, by analyzing the case of the fundamental and adjoint representations.

\section{Quantum groups and homogeneous spaces}\label{section:quantum-groups}

Let $\mathfrak{g}$ be a finite-dimensional complex semisimple Lie algebra.
Let $(\cdot, \cdot)$ be a multiple of the Killing form and fix a system $\{ \alpha_{1}, \cdots, \alpha_{r} \}$ of simple roots.
Denote by $(a_{ij})$ the Cartan matrix and set $d_{i} = (\alpha_{i}, \alpha_{i})/2$.
Using this data we can define the quantized universal enveloping algebra $U_q(\mathfrak{g})$, for a reference see the book \cite{klsc}.

Fix $0 < q < 1$. The quantized universal
enveloping algebra $U_q(\mathfrak{g})$ is generated by the elements $E_i$, $F_i$, $K_i$,
$K_i^{-1}$, $1\le i\le r$, satisfying the relations
$$
K_iK_i^{-1}=K_i^{-1}K_i=1,\ \ K_iK_j=K_jK_i,
$$
$$
K_iE_jK_i^{-1}=q_i^{a_{ij}}E_j,\ \
K_iF_jK_i^{-1}=q_i^{-a_{ij}}F_j,
$$
$$
E_iF_j-F_jE_i=\delta_{ij}\frac{K_i-K_i^{-1}}{q_i-q_i^{-1}},
$$
plus the quantum analogue of the Serre relations
$$
\sum^{1-a_{ij}}_{k=0}(-1)^k\begin{bmatrix}1-a_{ij}\\
k\end{bmatrix}_{q_i} E^k_iE_jE^{1-a_{ij}-k}_i=0,\ \
\sum^{1-a_{ij}}_{k=0}(-1)^k\begin{bmatrix}1-a_{ij}\\
k\end{bmatrix}_{q_i} F^k_iF_jF^{1-a_{ij}-k}_i=0,
$$
where $\displaystyle\begin{bmatrix}m\\
k\end{bmatrix}_{q_i}=\frac{[m]_{q_i}!}{[k]_{q_i}![m-k]_{q_i}!}$,
$[m]_{q_i}!=[m]_{q_i}[m-1]_{q_i}\dots [1]_{q_i}$,
$\displaystyle[n]_{q_i}=\frac{q_i^n-q_i^{-n}}{q_i-q_i^{-1}}$ and
$q_i=q^{d_i}$. This is a Hopf $*$-algebra with coproduct $\Delta$ and
counit $\varepsilon$ defined by
$$
\Delta(K_i)=K_i\otimes K_i,\ \
\Delta(E_i)=E_i\otimes1+ K_i\otimes E_i,\ \
\Delta(F_i)=F_i\otimes K_i^{-1}+1\otimes F_i,
$$
$$
\varepsilon(E_i)=\varepsilon(F_i)=0,\ \ \varepsilon(K_i)=1,
$$
and with involution given by $K_i^*=K_i$, $E_i^*=F_iK_i$, $F_i^*=K_i^{-1}E_i$.
There are several other different presentations of the quantized enveloping algebra $U_q(\mathfrak{g})$, but our results will be essentially independent of such a presentation.

Let $P$ be the weight lattice and $P^{+}$ the cone of dominant weights. We denote by $\{ \omega_{1}, \cdots, \omega_{r} \}$ the fundamental weights with respect to the fixed choice of simple roots $\{ \alpha_{1}, \cdots, \alpha_{r} \}$.
For a left $U_{q}(\mathfrak{g})$-module $V$, we say that $v \in V$ has weight $\mu$ if $K_{i} v = q^{(\mu, \alpha_{i})} v$ and write $V_{\mu}$ for the corresponding weight space.
The (admissible) irreducible finite-dimensional representations of $U_{q}(\mathfrak{g})$ can be labelled by their highest weights $\Lambda \in P^{+}$. The corresponding modules are denoted by $V(\Lambda)$ and have the weight decomposition
\begin{equation*}
V(\Lambda) = \bigoplus_{\mu \leq \Lambda} V(\Lambda)_{\mu}, \qquad
V(\Lambda)_{\mu} = \{ v \in V(\Lambda) : K_{i} v = q^{(\mu, \alpha_{i})} v \},
\end{equation*}
where $\leq$ is the dominance order on $P$ with respect to the positive roots.

The quantized function algebra $\mathbb{C}_{q}[G]$ can be defined as the Hopf $*$-subalgebra of the dual of $U_{q}(\mathfrak{g})$ spanned by the matrix coefficients of the finite dimensional $U_{q}(\mathfrak{g})$-representations.
Then the analogue of the Peter-Weyl theorem is given by the decomposition
\begin{equation}\label{eq:pet-wey}
\mathbb{C}_{q}[G] = \bigoplus_{\Lambda \in P^{+}} V(\Lambda) \otimes V(\Lambda)^{*}.
\end{equation}
The algebra $\mathbb{C}_{q}[G]$ is a $U_{q}(\mathfrak{g})$-bimodule with respect to the left and right actions given in terms of the pairing. In the following we will denote them by $\triangleright$ and $\triangleleft$.

Homogeneous spaces can also be quantized in a similar way, see \cite{stdi99} and references therein. The quantized function algebra $\mathbb{C}_{q}[G/K]$ is given by those elements of $\mathbb{C}_{q}[G]$ which are infinitesimally invariant under the action of $U_{q}(\mathfrak{k})$, that is
\begin{equation*}
\mathbb{C}_{q}[G/K] = \{ a \in \mathbb{C}_{q}[G] : X \triangleright a = \varepsilon(X) a, \ \forall X \in U_{q}(\mathfrak{k}) \}.
\end{equation*}
Since the left and right actions commute, the algebra $\mathbb{C}_{q}[G/K]$ is a right $U_{q}(\mathfrak{g})$-module.
In particular, by restricting to those representations that admit $K$-invariant vectors, we can obtain a decomposition similar to \eqref{eq:pet-wey}. We will come back to this point later on.

\section{Integration and quantum dimension}
\label{section:integration}

\subsection{Setting and notation}

In this section we consider a general quantized function algebra $\mathbb{C}_{q}[G]$, where $G$ is a compact simple Lie group. We investigate a quantum analogue of the first feature of the residue formula mentioned in the introduction. This is the proportionality between the integral and the (residue of) the trace of some operator.
In the quantum setting the integral must be replaced by the Haar state, which satisfies invariance properties which are the analogue of those of the Haar integral in the classical setting.

We now recall some of its properties. It is the unique state $h$ on $\mathbb{C}_{q}[G]$ which satisfies the properties $(\mathrm{id} \otimes h) \Delta(a) = (h \otimes \mathrm{id}) \Delta(a) = h(a) 1$ for all $a \in \mathbb{C}_{q}[G]$, which can be seen to be equivalent to the invariance properties in the classical case.
It does not satisfy the trace property. The action of its modular group can be expressed in terms of the action of $U_{q}(\mathfrak{g})$.
It takes the form $h(a b) = h(b \theta(a))$, where $\theta(a) = K_{2\rho} \triangleright a \triangleleft K_{2\rho}$ is written in terms of the left and right actions of $U_{q}(\mathfrak{g})$ on $\mathbb{C}_{q}[G]$.
Here $K_{2\rho}$ is the element which implements the square of the antipode. The notation $\rho$ refers to the half-sum of the positive roots.
Using the GNS construction for the Haar state, the algebra $\mathbb{C}_{q}[G]$ can be completed to a Hilbert space.

While there is a clear replacement for the integral, this is not the case for the other ingredient of the residue formula, the Laplace-Beltrami operator.
Indeed in the quantum setting we do not have a good notion of pseudo-differential calculus, so that it is not clear how to proceed.
We will return to the problem of making such a choice in a later section.
For the moment we will just require some general properties, in analogy with the classical case.
Recall that the Laplace-Beltrami operator can be identified with a central (quadratic) element in $U(\mathfrak{g})$, and is a positive operator on the Hilbert space $L^{2}(G)$.
As a replacement in the quantum setting we take an element $\mathcal{C} \in U_{q}(\mathfrak{g})$ (or possibly in some completion). We assume that:
\begin{itemize}
\item $\mathcal{C}$ is a central element in $U_{q}(\mathfrak{g})$,
\item $\mathcal{C}^{*} = \mathcal{C}$ with respect to the involution in $U_{q}(\mathfrak{g})$.
\end{itemize}

We can consider $\mathcal{C}$ to be acting on $\mathbb{C}_{q}[G]$ via the right action $\triangleleft$. Here we use the right action since for homogeneous spaces we will require invariance under the left action. In any case for central elements like $\mathcal{C}$ the two actions coincide.
It becomes an unbounded operator with dense domain $\mathbb{C}_{q}[G]$ on the corresponding Hilbert space $L^{2}_{q}(G)$.
It is easy to see that, due to the structure of $\mathbb{C}_{q}[G]$ and our reality assumption, it extends to a self-adjoint operator which we denote by the same symbol $\mathcal{C}$. Finally we ask that:
\begin{itemize}
\item $\mathcal{C}$ is a positive operator on the Hilbert space $L^{2}_{q}(G)$.
\end{itemize}

We also assume for simplicity that the operator $\mathcal{C}$ is invertible, but this assumption can be easily removed by standard arguments.

As discussed in \cite{mat14}, the lack of the trace property for the Haar state must be accounted for in the residue formula.
This means introducing explicitely the modular operator implementing the automorphism $\theta$, which we denote by $\modular$.
This will be taken into account by our definition of the relevant zeta functions.

\begin{definition}
We define the \emph{zeta function associated to} $\mathcal{C}$ by $\zeta_{\mathcal{C}}(z) = \mathrm{Tr}(\mathcal{C}^{-z/2} \modular)$.
Similarly, for $a \in \mathbb{C}_{q}[G]$ we define $\zeta_{\mathcal{C}, a}(z) = \mathrm{Tr}(a \mathcal{C}^{-z/2} \modular)$.
Here as usual $a$ denotes the operator of left multiplication by $a$, that is we omit the representation symbol.
\end{definition}

As they stand, these zeta functions are not necessarily well-defined. We need some summability condition for the relevant operators. This leads to the following definition.

\begin{definition}
We define the \emph{spectral dimension} associated to the zeta function $\zeta_{\mathcal{C}}(z)$ to be the number $p = \inf \{ s \geq 0 : \zeta_{\mathcal{C}}(s) < \infty \}$, when it exists.
\end{definition}

Therefore the zeta function $\zeta_{\mathcal{C}}(z)$ will be defined at least for $\{ s \in \mathbb{R} : s > p \}$.
It actually follows immediately from Hölder's inequality that this can be extended to $\{ z \in \mathbb{C} : \mathrm{Re}(z) > p \}$.
These statements hold true also for the zeta function $\zeta_{\mathcal{C}, a}(z)$.

\subsection{Proportionality with the Haar state}

In the setting that we have described above, we will find that there is a simple relation between the two zeta functions $\zeta_{\mathcal{C}}$ and $\zeta_{\mathcal{C}, a}$.
Before getting into that, we note a simple result on the holomorphicity of such functions, which holds in the half-plane defined by the spectral dimension.

\begin{lemma}
Let $p$ be the spectral dimension associated to the zeta function $\zeta_{\mathcal{C}}$. Then $\zeta_{\mathcal{C}}(z)$ is holomorphic for all $z \in \mathbb{C}$ such that $\mathrm{Re}(z) > p$.
The same holds $\zeta_{\mathcal{C}, a}$.
\end{lemma}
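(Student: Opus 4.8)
The plan is to reduce both statements to the elementary theory of generalized Dirichlet series with positive coefficients, after first observing that $\mathcal{C}$ and $\modular$ can be simultaneously diagonalized. The first step is to record that, as operators on $L^{2}_{q}(G)$, both $\mathcal{C}$ (acting through $\triangleleft$) and $\modular$ are positive and self-adjoint, and that they commute. Commutativity follows from the centrality of $\mathcal{C}$: being central, $\mathcal{C}$ commutes with $K_{2\rho}$, so the right action of $\mathcal{C}$ commutes with the right action of $K_{2\rho}$; since the left and right actions commute, $\mathcal{C}$ also commutes with the left action of $K_{2\rho}$, and hence with $\modular$, which implements $\theta(a)=K_{2\rho}\triangleright a\triangleleft K_{2\rho}$. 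One may then fix an orthonormal basis $\{e_{n}\}$ of joint eigenvectors, with $\mathcal{C}e_{n}=\lambda_{n}e_{n}$ and $\modular e_{n}=\delta_{n}e_{n}$, where $\lambda_{n},\delta_{n}>0$ and $\lambda_{n}\to\infty$ (so that only finitely many $\lambda_{n}$ lie below any given bound, reflecting that the eigenvalues of $\mathcal{C}$ are the Casimir-type values $\chi_{\Lambda}(\mathcal{C})$, each of finite multiplicity).

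With this basis I would write $\zeta_{\mathcal{C}}(z)=\sum_{n}\lambda_{n}^{-z/2}\delta_{n}$ and recognize it as a Dirichlet series with positive coefficients $\delta_{n}$ and frequencies $\tfrac{1}{2}\log\lambda_{n}$. By definition of the spectral dimension this series converges for every real $s>p$. For $z$ with $\mathrm{Re}(z)=\sigma$ the modulus of the general term equals $\lambda_{n}^{-\sigma/2}\delta_{n}$, so on any compact subset $K\subset\{\mathrm{Re}(z)>p\}$, for which $\sigma\geq\sigma_{0}>p$ say, each term with $\lambda_{n}\geq1$ is bounded by $\lambda_{n}^{-\sigma_{0}/2}\delta_{n}$; since only finitely many $\lambda_{n}$ are smaller than $1$ and these contribute entire functions bounded on $K$, the series converges absolutely and uniformly on $K$. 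As each partial sum is a finite combination of the entire functions $z\mapsto\lambda_{n}^{-z/2}$, the Weierstrass convergence theorem yields holomorphicity of $\zeta_{\mathcal{C}}$ on $\{\mathrm{Re}(z)>p\}$.

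For $\zeta_{\mathcal{C},a}$ I would use that left multiplication by $a$ extends to a bounded operator on $L^{2}_{q}(G)$, of some norm $\|a\|$, since $h$ is a state on the $C^{*}$-completion. As $\mathcal{C}^{-z/2}\modular$ is trace class for $\mathrm{Re}(z)>p$ (its diagonal entries $\lambda_{n}^{-z/2}\delta_{n}$ are absolutely summable, with sum $\zeta_{\mathcal{C}}(\mathrm{Re}(z))$), the product $a\,\mathcal{C}^{-z/2}\modular$ is again trace class, and its trace may be computed in the basis $\{e_{n}\}$, giving $\zeta_{\mathcal{C},a}(z)=\sum_{n}\lambda_{n}^{-z/2}\delta_{n}\langle e_{n},a e_{n}\rangle$. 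The estimate $|\langle e_{n},a e_{n}\rangle|\leq\|a\|$ bounds this series termwise by $\|a\|$ times the series for $\zeta_{\mathcal{C}}$, so the domination established above applies verbatim and gives absolute, locally uniform convergence; holomorphicity on $\{\mathrm{Re}(z)>p\}$ then follows once more from Weierstrass.

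The routine parts here are the Dirichlet-series estimates and the handling of the finitely many eigenvalues below $1$, which are harmless. The point that genuinely needs care — and which I regard as the crux — is the simultaneous diagonalization: one must ensure that $\mathcal{C}$ and $\modular$ commute as unbounded self-adjoint operators, and not merely on the algebraic core $\mathbb{C}_{q}[G]$. This is precisely where the centrality of $\mathcal{C}$ and the commutation of the left and right actions are indispensable; once it is secured, the whole argument collapses to the holomorphicity of a positive Dirichlet series together with the boundedness of $a$.
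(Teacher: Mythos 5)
Your proof is correct, and at heart it runs on the same engine as the paper's: both arguments establish uniform convergence on compact subsets of $\{\mathrm{Re}(z) > p\}$ by dominating the series with its value at a real point $\sigma_{0} > p$, and then invoke Weierstrass' theorem; likewise, both reduce the statement for $\zeta_{\mathcal{C},a}$ to the boundedness of left multiplication by $a$. The difference is in the implementation, and here the comparison is instructive. The paper never diagonalizes: it writes $|\mathrm{Tr}(\mathcal{C}^{-z/2}\modular)| \leq \|\mathcal{C}^{-(s - (p+\epsilon))/2}\|\,\mathrm{Tr}(\mathcal{C}^{-(p+\epsilon)/2}\modular)$ and takes the maximum of the norm over the compact set, which sidesteps both of the points you flag as delicate. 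In particular, the issue you call the crux --- commutation of $\mathcal{C}$ and $\modular$ as unbounded self-adjoint operators rather than merely on the algebraic core --- is a non-issue in this setting: by the Peter--Weyl decomposition \eqref{eq:pet-wey}, $L^{2}_{q}(G)$ is an orthogonal direct sum of \emph{finite-dimensional} subspaces $V(\Lambda) \otimes V(\Lambda)^{*}$ invariant under both operators, with $\mathcal{C}$ acting as the scalar $\chi_{\Lambda}(\mathcal{C})$ on each block, so a joint eigenbasis exists for elementary reasons and no spectral-theoretic commutation argument is needed. Second, your parenthetical claim that $\lambda_{n} \to \infty$, hence that only finitely many eigenvalues lie below $1$, is not among the standing hypotheses and you do not prove it; it is true here (since $\dim_{q} V(\Lambda) \geq 1$, finiteness of $\zeta_{\mathcal{C}}(\sigma_{0})$ forces $\chi_{\Lambda}(\mathcal{C}) \to \infty$), but it is also avoidable: the assumed invertibility gives $\mathcal{C} \geq c > 0$, whence for $\sigma \in [\sigma_{0}, \sigma_{1}]$ one has $\lambda_{n}^{-\sigma/2} \leq \max\bigl(1, c^{-(\sigma_{1} - \sigma_{0})/2}\bigr)\,\lambda_{n}^{-\sigma_{0}/2}$ uniformly in $n$, which restores your domination with no claim about small eigenvalues. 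With either of these two patches made explicit, your Dirichlet-series version is a complete and somewhat more explicit proof than the paper's operator-norm estimate, at the cost of setting up machinery (joint diagonalization, trace-class bookkeeping) that the paper's one-line inequality renders unnecessary.
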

\begin{proof}
The statement will follow from Weierstrass' theorem (or equivalently Morera's theorem).
We briefly recall its formulation. Suppose we have a series $u(z) = \sum_{k = 0}^{\infty} u_{k}(z)$ which converges uniformly on compact sets inside a domain $D$. Moreover suppose that $u_{k}$ are holomorphic functions in $D$. Then the function $u(z)$ is holomorphic in $D$.

In our case the domain will be the half-plane $\{ z \in \mathbb{C} : \mathrm{Re}(z) > p \}$. It is clear, by writing the trace explicitely, that the terms of our series are holomorphic in this domain.
Then we have to show that it converges uniformly on compact sets. To do this, let us fix some $\epsilon > 0$ and write $z = s + i u$ with $s \geq p + \epsilon$.
Then we have the simple inequality
\begin{equation*}
| \mathrm{Tr}(\mathcal{C}^{-z/2} \modular) |
= | \mathrm{Tr}(\mathcal{C}^{-(s - (p + \epsilon) + i u)/2} \mathcal{C}^{-(p + \epsilon)/2} \modular) |
\leq \| \mathcal{C}^{-(s - (p + \epsilon))/2} \| \mathrm{Tr}(\mathcal{C}^{-(p + \epsilon)/2} \modular).
\end{equation*}
By taking the maximum of $\| \mathcal{C}^{-(s - (p + \epsilon))/2} \|$ over the chosen compact set we obtain the uniform bound.
Therefore we conclude that $\zeta_{\mathcal{C}}(z)$ is holomorphic for $\mathrm{Re}(z) > p$.
To prove the analogue result for $\zeta_{\mathcal{C}, a}(z)$ we only need to observe that $a$ acts as a bounded operator.
\end{proof}

In the next proposition we state the relation between the zeta functions $\zeta_{\mathcal{C}}(z)$ and $\zeta_{\mathcal{C}, a}(z)$.
This follows from a result which, in some form or another, is well known to the experts.
Here we provide a minor generalization of this result as an equality of holomorphic functions.

\begin{proposition}
\label{prop:res-int}
Let $\mathcal{C}$ be an operator satisfying the properties listed previously. Let $p$ be the spectral dimension of the corresponding zeta function.
Then we have $\zeta_{\mathcal{C}, a}(z) = \zeta_{\mathcal{C}}(z) h(a)$ for any $a \in \mathbb{C}_{q}[G]$ and for any $z \in \mathbb{C}$ such that $\mathrm{Re}(z) > p$.
Here $h$ is the Haar state.
\end{proposition}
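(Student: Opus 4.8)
The plan is to reduce the statement to a block-diagonal computation via the Peter--Weyl decomposition \eqref{eq:pet-wey} and then reassemble. Write $L^{2}_{q}(G) = \bigoplus_{\Lambda \in P^{+}} H_{\Lambda}$, where $H_{\Lambda}$ is the closure of the block $V(\Lambda) \otimes V(\Lambda)^{*}$, and let $P_{\Lambda}$ be the orthogonal projection onto $H_{\Lambda}$. Two structural facts drive the argument. First, since $\mathcal{C}$ is central we have $\pi_{\Lambda}(\mathcal{C}) = \chi_{\Lambda}(\mathcal{C})\,\mathrm{id}$, so its right action on the matrix coefficients spanning $H_{\Lambda}$ is the scalar $\chi_{\Lambda}(\mathcal{C})$; hence $\mathcal{C}^{-z/2}$ acts on $H_{\Lambda}$ as $\chi_{\Lambda}(\mathcal{C})^{-z/2}$, which is well defined since $\mathcal{C}$ is positive and invertible. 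Second, the operator $\modular$, implementing $\theta(a) = K_{2\rho} \triangleright a \triangleleft K_{2\rho}$, is positive and block-diagonal, so it preserves each $H_{\Lambda}$. Consequently $\mathcal{C}^{-z/2}\modular$ is block-diagonal and, by the estimate in the preceding lemma, trace-class for $\mathrm{Re}(z) > p$.

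Next I would exploit that only the block-diagonal part of a trace-class operator contributes to its trace. Although $a$ acts by left multiplication and need not preserve the blocks, choosing an orthonormal basis adapted to $\bigoplus_{\Lambda} H_{\Lambda}$ and using that $\mathcal{C}^{-z/2}\modular$ leaves each $H_{\Lambda}$ invariant gives
\[
\zeta_{\mathcal{C}, a}(z) = \mathrm{Tr}(a \mathcal{C}^{-z/2} \modular) = \sum_{\Lambda \in P^{+}} \chi_{\Lambda}(\mathcal{C})^{-z/2}\, \mathrm{Tr}_{H_{\Lambda}}\big(P_{\Lambda}\, a\, P_{\Lambda}\, \modular\big),
\]
the rearrangement being justified by absolute convergence for $\mathrm{Re}(z) > p$. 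The proposition then reduces entirely to the single-block identity $\mathrm{Tr}_{H_{\Lambda}}(P_{\Lambda}\, a\, P_{\Lambda}\, \modular) = h(a)\, \mathrm{Tr}_{H_{\Lambda}}(\modular)$ for each $\Lambda$: granting it, the scalar $h(a)$ factors out of the sum, the remaining series is exactly $\zeta_{\mathcal{C}}(z)$, and we obtain $\zeta_{\mathcal{C}, a}(z) = \zeta_{\mathcal{C}}(z) h(a)$.

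The block identity is the analytic heart of the matter, and is the statement that is well known to the experts. I would prove it by an explicit computation in the orthogonal basis of matrix coefficients $u^{\Lambda}_{ij}$ of $V(\Lambda)$. By the quantum Schur orthogonality relations for $h$, these are mutually orthogonal with squared norms given by weight factors coming from $K_{2\rho}$, and $\modular$ is diagonal in this basis with eigenvalues $m_{ij}$ likewise determined by $K_{2\rho}$. Expanding the compressed trace yields $\sum_{ij} \|u^{\Lambda}_{ij}\|^{-2}\, \langle u^{\Lambda}_{ij}, a\, \modular\, u^{\Lambda}_{ij}\rangle = \sum_{ij} \|u^{\Lambda}_{ij}\|^{-2}\, m_{ij}\, h\big((u^{\Lambda}_{ij})^{*} a\, u^{\Lambda}_{ij}\big)$, and the weight factors supplied by $\modular$ are precisely those that turn the averaging $a \mapsto \sum_{ij} \|u^{\Lambda}_{ij}\|^{-2} m_{ij}\, (u^{\Lambda}_{ij})^{*} a\, u^{\Lambda}_{ij}$ into a multiple of the left-invariant functional, so that the sum collapses to $h(a)\,\mathrm{Tr}_{H_{\Lambda}}(\modular)$ by the invariance of $h$. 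This is exactly where $\modular$ is indispensable: with the plain trace the weight factors would not match and the identity would fail, reflecting the failure of the trace property for $h$.

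Finally, to present the result as an equality of holomorphic functions rather than a pointwise one, I would observe that both sides are holomorphic on $\{\mathrm{Re}(z) > p\}$ by the preceding lemma. Since the block computation already produces the identity for every $z$ with $\mathrm{Re}(z) > p$, no further step is strictly needed; alternatively one verifies the identity on the real ray $z > p$ and extends it to the connected half-plane by the identity theorem. I expect the main obstacle to be the single-block identity of the third paragraph, together with the care required to justify the block-wise rearrangement of the trace in the presence of the non-block-preserving multiplication operator $a$; once these are settled the conclusion is immediate.
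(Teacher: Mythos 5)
Your proof is correct in outline, but it takes a genuinely different route from the paper. The paper disposes of the whole statement in a few lines: for real $z = s$ it observes that $\mathcal{C}^{-s/2}$ is positive and commutes with all of $U_{q}(\mathfrak{g})$ and then invokes \cite[Lemma 2.1]{netu05} (whose proof goes via \cite[Section 1.4]{netu04} together with the uniqueness of the Haar state), and it obtains the complex case by the identity theorem, using the holomorphicity lemma proved just before. What you have done is essentially to unpack that citation: your single-block identity $\mathrm{Tr}_{H_{\Lambda}}(P_{\Lambda} a P_{\Lambda} \modular) = h(a)\,\mathrm{Tr}_{H_{\Lambda}}(\modular)$ is precisely the content of the Neshveyev--Tuset lemma specialized to $T = P_{\Lambda}$, and your Schur-orthogonality computation is in substance the standard proof of it. Your version buys self-containedness and yields the identity directly for every complex $z$ with $\mathrm{Re}(z) > p$ (your block-wise rearrangement is fine: $\mathcal{C}^{-z/2}\modular$ preserves each $H_{\Lambda}$, left multiplication by $a$ is bounded, and $\mathrm{Tr}(\mathcal{C}^{-\mathrm{Re}(z)/2}\modular) < \infty$ gives absolute convergence), so the analytic-continuation step becomes optional rather than essential; the paper's version buys brevity at the cost of an external reference. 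One caution: the third paragraph, which you rightly identify as the analytic heart, is still only a sketch. To close it you must actually verify the weight bookkeeping --- e.g.\ use the modular property $h(xy) = h(y\,\theta(x))$ with $\theta(x) = K_{2\rho} \triangleright x \triangleleft K_{2\rho}$ to write each term as $h\bigl(a\, u^{\Lambda}_{ij}\, \theta((u^{\Lambda}_{ij})^{*})\bigr)$, and check that the $j$-dependence of the modular eigenvalues $m_{ij}$ and of the norms $\|u^{\Lambda}_{ij}\|^{-2}$ cancels so that the unitarity relation $\sum_{j} u^{\Lambda}_{ij} (u^{\Lambda}_{kj})^{*} = \delta_{ik}$ collapses the sum to a scalar multiple of $1$ --- or else argue, as the cited references do, that the block functional is invariant under the coproduct and conclude by uniqueness of the Haar state. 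Either completion works; as written, the claim that ``the weight factors are precisely those that turn the averaging into a multiple of the left-invariant functional'' is asserted rather than proved.
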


\begin{proof}
We start by proving the statement for $z = s \in \mathbb{R}$.
By assumption $\mathcal{C}^{-s/2}$ is a positive operator and commutes with all of $U_{q}(\mathfrak{g})$.
Therefore the result follows from \cite[Lemma 2.1]{netu05}, although stated in a slightly different language.
Since this lemma is given without proof in the paper, we mention that the proof can be obtained by using the results of \cite[Section 1.4]{netu04} together with the uniqueness of the Haar state $h$.

The complex version of the statement easily follows from the previous lemma. Indeed the two zeta functions are holomorphic in the open half-plane $\{ z \in \mathbb{C} : \mathrm{Re}(z) > p \}$ and satisfy $\zeta_{\mathcal{C},a}(z) = \zeta_{\mathcal{C}}(z) h(a)$ for $z$ in the open half-line $(0,+\infty)$. But then they must coincide in the entire half-plane, by the uniqueness of analytic continuation.
\end{proof}

The above result can be regarded as the quantum counterpart of the classical result for these zeta functions.
Namely we find the proportionality of the trace of some operator with the Haar state, which provides the natural notion of integration in this context.
We should stress that this would not be the case had we omitted the modular operator $\modular$.
On the other hand this result is essentially independent of the choice of $\mathcal{C}$.
What changes, however, is the location of the singularity of $\zeta_{\mathcal{C}}(z)$, that is the spectral dimension.
This will be investigated in detail in later sections, where making a specific choice for $\mathcal{C}$ will be important.

\subsection{Quantum dimension}

We can gain some additional insight into the zeta function $\zeta_{\mathcal{C}}$ by using the analogue of the Peter-Weyl theorem for compact quantum groups. We can write $\mathbb{C}_{q}[G]$ as a sum over irreducible representations, which are the same as in the classical case.
Since the element $\mathcal{C}$ is central, it takes the constant value $\chi_{\Lambda}(\mathcal{C})$ in any irreducible representation of highest weight $\Lambda$.
Therefore the trace formula can be rewritten as
\begin{equation*}
\zeta_{\mathcal{C}}(z)
= \mathrm{Tr}(\mathcal{C}^{-z/2} \modular)
= \sum_{\Lambda \in P^{+}} \mathrm{Tr}_{V(\Lambda) \otimes V(\Lambda)^{*}}(\modular) \chi_{\Lambda}(\mathcal{C})^{-z/2}.
\end{equation*}
We have denoted explicitely the fact that the trace of $\modular$ should be taken over the vector space $V(\Lambda) \otimes V(\Lambda)^{*}$. In the classical case, with the modular operator reducing to the identity, this trace would give the dimension of this vector space, that is $(\dim V(\Lambda))^{2}$. In the quantum case the value of $\mathrm{Tr}_{V(\Lambda)}(\modular) = \dim_{q} V(\Lambda)$ is known as  the \emph{quantum dimension} of $V(\Lambda)$ (also called intrinsic dimension in more categorial approaches). It can be proven that it satisfies the property $\dim_{q}(V \otimes W) = \dim_{q} V \cdot \dim_{q} W$ for any two modules $V, W$.

It is possible to obtain an explicit formula for the quantum dimension in terms of the roots of $\mathfrak{g}$, see \cite[Lemma 1]{zgb91}.
Denote by $\rho$ the half-sum of the positive roots, or alternatively the sum of the fundamental weights. Then we have
\begin{equation*}
\dim_{q}(\Lambda) = \prod_{\alpha > 0} \frac{q^{(\Lambda + \rho, \alpha)} - q^{-(\Lambda + \rho, \alpha)}}{q^{(\rho, \alpha)} - q^{-(\rho, \alpha)}}.
\end{equation*}
Here the product is taken over all positive roots. Notice that for $q \to 1$ it reduces to the usual Weyl dimension formula. We now wish to obtain an asymptotic formula for the quantum dimension for large $\Lambda$. What this means is explained below.

\begin{notation}
\label{nota:asymp}
We will write $f(n) \sim g(n)$ to mean that $C_{1} |g(n)| \leq |f(n)| \leq C_{2} |g(n)|$ for large enough natural numbers $n = \{ n_{1}, \cdots, n_{r} \}$, where $C_{1}$ and $C_{2}$ are two positive constants. In other words, $f(n) \sim g(n)$ if $f(n) = O(g(n))$ and $g(n) = O(f(n))$ for $n \to \infty$.
\end{notation}

We will also write $\Lambda = \sum_{k = 1}^{r} n_{k} \omega_{k}$ for a general dominant weight, where $\{ \omega_{k} \}$ are the fundamental weights.
We can now easily give the asymptotic formula.

\begin{lemma}\label{lem:qdim}
Let $V(\Lambda)$ be an $U_{q}(\mathfrak{g})$-module. Then $\dim_{q}(\Lambda) \sim q^{-(\Lambda, 2\rho)}$.
\end{lemma}
\begin{proof}
Since the scalar product $(\Lambda, \alpha)$ is positive and $0 < q < 1$, it follows that for $n \to \infty$ the term $q^{(\Lambda, \alpha)}$ goes to zero while the term $q^{-(\Lambda, \alpha)}$ grows large.
Therefore we have
\begin{equation*}
\frac{q^{(\Lambda + \rho, \alpha)} - q^{-(\Lambda + \rho, \alpha)}}{q^{(\rho, \alpha)} - q^{-(\rho, \alpha)}} \sim q^{-(\Lambda, \alpha)}.
\end{equation*}
The same holds when taking products, so that
\begin{equation*}
\dim_{q}(\Lambda)
\sim \prod_{\alpha > 0} q^{-(\Lambda, \alpha)}
= q^{- \sum_{\alpha > 0} (\Lambda, \alpha)}.
\end{equation*}
Using the formula $\rho = \frac{1}{2} \sum_{\alpha > 0} \alpha$ the conclusion follows.
\end{proof}

\section{Choice of a central element}
\label{section:central}

\subsection{Discussion of the choices}

In this section we discuss the choice of a central element that should play the role of the quadratic Casimir in the quantum setting.
Let us recall that in the classical case the Casimir element is the central element that takes the value $(\Lambda, \Lambda + 2 \rho)$ in an irreducible representation of highest weight $\Lambda$.
It is the unique central element which is quadratic in the generators, up to rescaling.
Unfortunately in the quantum case there is no canonical choice of such a central element, at least for our purposes.

For this reason we will consider a general procedure to obtain central elements, which is given in \cite{ligo92}.
Suppose we have an element $W \in U_{q} (\mathfrak{g}) \otimes U_{q} (\mathfrak{g})$ (strictly speaking in some completion) such that $\Delta(x) W = W \Delta(x)$ for all $x \in U_{q} (\mathfrak{g})$.
Let us fix a representation $V(\Lambda_{0})$ of highest weight $\Lambda_{0}$.
Define the $q$-trace $\tau_{q}: U_{q} (\mathfrak{g}) \otimes U_{q} (\mathfrak{g}) \to U_{q} (\mathfrak{g})$ by
\begin{equation*}
\tau_{q}(x \otimes y) = x \mathrm{Tr}_{V(\Lambda_{0})} ( \pi_{\Lambda_{0}} (K_{2\rho} y) ),
\end{equation*}
where $\pi_{\Lambda_{0}}$ and $\mathrm{Tr}_{V(\Lambda_{0})}$ denote the representation and the trace on the vector space $V(\Lambda_{0})$.
Then it can be proven that $\tau_{q}(W)$ is a central element in $U_{q} (\mathfrak{g})$.

The value of the central element $\tau_{q}(W)$ acting on a representation $V(\Lambda)$ of highest weight $\Lambda$ can be determined explicitely.
We denote this value by $\chi_{\Lambda}(\tau_{q}(W))$.
First we introduce some notation.
Let $V(\mu)$ be an irreducible representation with highest weight $\mu$ which occurs in the decomposition of $V(\Lambda) \otimes V(\Lambda_{0})$.
We can write $\mu = \Lambda + \lambda$, where $\lambda$ is a weight of $V(\Lambda_{0})$.
Suppose that $W$ acts as the scalar $w_{\lambda}$ on $V(\Lambda + \lambda)$.
Then it can be proven that
\begin{equation}\label{eq:value-c}
\chi_{\Lambda}(\tau_{q}(W))
= \sum_{\lambda} w_{\lambda} \frac{\dim_{q} V(\Lambda + \lambda)}{\dim_{q} V(\Lambda)},
\end{equation}
where the sum is over all the weights $\lambda$ of $V(\Lambda_{0})$ (with their multiplicities).

In particular, since $U_{q} (\mathfrak{g})$ is a quasi-triangular Hopf algebra, it has a universal $R$-matrix $\mathcal{R}$ which can be used for this construction.
Indeed, from the defining properties of $\mathcal{R}$, it follows that $\Delta(x) \mathcal{R}^{T} \mathcal{R} = \mathcal{R}^{T} \mathcal{R} \Delta(x)$, where $\mathcal{R}^{T}$ denotes the transpose of $\mathcal{R}$.
In \cite{ligo92} it is shown that for $W = (\mathcal{R}^{T} \mathcal{R} - 1) / (q - q^{-1})$ the invariants $\tau_{q}(W^{m})$, for $m \in \mathbb{N}$, reduce in the classical limit $q \to 1$ to the Gel'fand invariants for the Lie algebra $\mathfrak{g}$.

In the following we will use this general construction to define a central element $\mathcal{C}$, which will satisfy the requests of the previous section. We will however make a slightly more general choice for the element $W$. In particular we will define a one-parameter family of central elements $\mathcal{C}_{t}$. Before doing that, we must recall some properties of the $R$-matrix, which will motivate our generalization.
Specifically we are interested in the combination $\mathcal{R}^{T} \mathcal{R}$, since it commutes with coproducts.
First of all we have \cite[Section 8.4.3, Proposition 21]{klsc}
\begin{equation*}
\mathcal{R}^{T} \mathcal{R} = \Delta(q^{C}) (q^{-C} \otimes q^{-C}),
\end{equation*}
where $C$ is the central element which acts by multiplication by $(\Lambda, \Lambda + 2 \rho)$ in a representation of highest weight $\Lambda$. From this expression it is easy to see the semiclassical limit of $\mathcal{R}^{T} \mathcal{R}$, at least in a formal setting.
Indeed if we set $q = e^{\hbar}$ and expand up to first order in $\hbar$ we get
\begin{equation*}
\mathcal{R}^{T} \mathcal{R} = 1 \otimes 1 + \hbar(\Delta(C) - C \otimes 1 - 1 \otimes C) + O(\hbar^{2}).
\end{equation*}
Notice that this expression tells us that it is $\tau_{q}(W^{2})$ that will reduce to the Casimir operator, not $\tau_{q}(W)$. Indeed the latter will be linear in the generators and, being central, will vanish for a simple Lie algebra.
Next we look at the action of $\mathcal{R}^{T} \mathcal{R}$ on the tensor product $V(\Lambda) \otimes V(\Lambda_{0})$.
On each irreducible component $V(\mu)$ it acts as scalar multiplication by $q^{-(\Lambda, \Lambda + 2\rho) - (\Lambda_{0}, \Lambda_{0} + 2\rho) + (\mu, \mu + 2\rho)}$, see \cite[Section 8.4.3, Proposition 22]{klsc}.
In particular, upon writing $\mu = \Lambda + \lambda$ with $\lambda$ a weight of $V(\Lambda_{0})$, we can recast this scalar in the form
\begin{equation*}
r_{\lambda}
= q^{2(\Lambda, \lambda) + (\lambda, \lambda + 2\rho) - (\Lambda_{0}, \Lambda_{0} + 2\rho)},
\end{equation*}
In any case, from either expression we see that $\mathcal{R}^{T} \mathcal{R}$ is a positive diagonal operator, so that it makes sense to define $(\mathcal{R}^{T} \mathcal{R})^{t}$ for $t \in \mathbb{R}$.

With these preliminaries we can proceed to discuss our choices. First of all notice that, for what concerns the classical limit, we are free to replace the expression $(\mathcal{R}^{T} \mathcal{R} - 1) / (q - q^{-1})$ with $(\mathcal{R}^{T} \mathcal{R} - (\mathcal{R}^{T} \mathcal{R})^{-1}) / (q - q^{-1})$, since the only difference will be a factor of $2$.
This modification not essential for our results, but makes some formulae nicer and has the suggestive form of a $q$-number.
The second modification we make has important consequences, on the other hand.
As noted in the discussion on tensor products, we can define $(\mathcal{R}^{T} \mathcal{R})^{t}$ for any $t \in \mathbb{R}$.
But then it is easy to see that the construction of central elements still goes through if we use $(\mathcal{R}^{T} \mathcal{R})^{t}$ for $t \neq 1$.
Therefore this is a possibility that we might want to take into account.

\begin{remark}
It is easy to see that, in the classical limit, the invariants obtained for $t \neq 1$ differ from those obtained for $t = 1$ only by a rescaling.
On the other hand, in the quantum case different values of $t$ will give different asymptotics for these invariants, as we will see shortly.
We can appreciate this difference with the following (possibly naive) example: if the eigenvalues $\lambda_{n}$ of some classical operator are replaced by the $q$-numbers $[\lambda_{n}]$ in the quantum setting, then the transformation $\lambda_{n} \to t \lambda_{n}$ is not linear in the latter case.
In the classical Weyl's formula, such a rescaling only changes the prefactor, but not the main features of the result.
It is not difficult to imagine that this need not be the case in the quantum setting.
\end{remark}

Having discussed our choices, we can at this point simply define the element
\begin{equation*}
A_{t} = \frac{(\mathcal{R}^{T} \mathcal{R})^{t} - (\mathcal{R}^{T} \mathcal{R})^{-t}}{q - q^{-1}}.
\end{equation*}
We use a different letter from $W$ in order not to create confusion.
We can readily compute the action of $A_{t}$ on irreducible components of the tensor product $V(\Lambda) \otimes V(\Lambda_{0})$.
With conventions similar to those used above it takes the form
\begin{equation}\label{eq:a-comp}
a_{t, \lambda}
= \frac{r_{\lambda}^{t} - r_{\lambda}^{-t}}{q - q^{-1}}, \quad
r_{\lambda} = q^{2(\Lambda, \lambda) + (\lambda, \lambda + 2\rho) - (\Lambda_{0}, \Lambda_{0} + 2\rho)}.
\end{equation}
Then we define our one-parameter family of central elements by
\begin{equation}\label{eq:def-cas}
\mathcal{C}_{t} = \tau_{q}(A_{t}^{2}) =
\tau_{q} 
\left( \frac{(\mathcal{R}^{T} \mathcal{R})^{t} - (\mathcal{R}^{T} \mathcal{R})^{-t}}{q - q^{-1}} \right)^{2}.
\end{equation}
In the following we will assume, without loss of generality, that $t > 0$.

\begin{remark}
The strategy used to define $\mathcal{C}_{t}$ points to a close connection with the notion of quantum Lie algebra \cite{dego97}.
This connection seems to be reinforced by the the particular role that the adjoint representation will play, as we will see in the last section.
It would be interesting to understand how this notion fits into our picture.
\end{remark}

\subsection{Main estimate}

We now proceed to prove an estimate that will be used in the computation of the spectral dimension.
It concerns the value $\chi_{\Lambda}(\mathcal{C}_{t})$ in a representation of highest weight $\Lambda$, where $\mathcal{C}_{t}$ is defined by \eqref{eq:def-cas}. Recall that this definition depends on the choice of a fixed representation $\Lambda_{0}$. The estimate will make use of the asymptotic relation that was defined in \autoref{nota:asymp}. We also recall that we are assuming $t > 0$.

\begin{lemma}
\label{lemma:cas-ineq}
Let $V(\Lambda)$ be an $U_{q}(\mathfrak{g})$-module. Fix some arbitrary $U_{q}(\mathfrak{g})$-module $V(\Lambda_{0})$. Then we have the asymptotic relation $\chi_{\Lambda}(\mathcal{C}_{t}) \sim q^{-4t (\Lambda, \Lambda_{0})}$.
\end{lemma}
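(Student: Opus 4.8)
The plan is to start from the explicit eigenvalue formula \eqref{eq:value-c} applied to $W = A_{t}^{2}$. Since $A_{t}$ acts on the irreducible component $V(\Lambda + \lambda)$ of $V(\Lambda) \otimes V(\Lambda_{0})$ by the scalar $a_{t,\lambda}$ recorded in \eqref{eq:a-comp}, the element $W = A_{t}^{2}$ acts there by $a_{t,\lambda}^{2}$, and \eqref{eq:value-c} yields
\begin{equation*}
\chi_{\Lambda}(\mathcal{C}_{t}) = \sum_{\lambda} a_{t,\lambda}^{2} \, \frac{\dim_{q} V(\Lambda + \lambda)}{\dim_{q} V(\Lambda)},
\end{equation*}
a finite sum of positive terms indexed by the weights $\lambda$ of the fixed module $V(\Lambda_{0})$ (with multiplicities); for $\Lambda$ deep in the dominant chamber each $\Lambda + \lambda$ is dominant, so the decomposition is genuine. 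The strategy is to estimate the two factors separately in the regime where all the $n_{k}$ are large, in the sense of \autoref{nota:asymp}, and then identify the dominant term.

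For the quantum-dimension ratio I would use \autoref{lem:qdim}: with $\lambda$ fixed and the entries of $\Lambda$ growing, the product formula gives $\dim_{q} V(\Lambda + \lambda) / \dim_{q} V(\Lambda) \to q^{-(\lambda, 2\rho)}$, each factor tending to $q^{-(\lambda, \alpha)}$, so this ratio is bounded above and below by positive constants, uniformly over the finite weight set. For $a_{t,\lambda}^{2}$ I would use that the $q$-exponent of $r_{\lambda}$ equals $2(\Lambda, \lambda)$ up to a $\Lambda$-independent constant; hence for large $\Lambda$ exactly one of $r_{\lambda}^{t}, r_{\lambda}^{-t}$ dominates (the one with the more negative exponent), giving $|a_{t,\lambda}| \sim q^{-2t|(\Lambda,\lambda)|}$ and so $a_{t,\lambda}^{2} \sim q^{-4t|(\Lambda,\lambda)|}$. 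Combining, the $\lambda$-term is of order $q^{-4t|(\Lambda,\lambda)|}$, and since we are summing finitely many positive terms, $\chi_{\Lambda}(\mathcal{C}_{t}) \sim q^{-4t \, \max_{\lambda} |(\Lambda, \lambda)|}$.

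It then remains to evaluate $\max_{\lambda} |(\Lambda, \lambda)|$ over the weights of $V(\Lambda_{0})$. The lower bound $\chi_{\Lambda}(\mathcal{C}_{t}) \geq c \, q^{-4t(\Lambda, \Lambda_{0})}$ is immediate: retaining only the highest-weight term $\lambda = \Lambda_{0}$ (for which $(\Lambda, \Lambda_{0}) > 0$) and discarding the rest suffices. For the upper bound I would first establish $(\Lambda, \lambda) \leq (\Lambda, \Lambda_{0})$ for every weight $\lambda$: since $\Lambda_{0} - \lambda$ is a non-negative integer combination of simple roots and $(\Lambda, \alpha_{i}) \geq 0$ for dominant $\Lambda$, the pairing is maximized at the highest weight. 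This is the clean half of the estimate.

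The main obstacle is the negative extreme. The lowest weight $w_{0}\Lambda_{0}$ contributes a term of order $q^{-4t(\Lambda, \Lambda_{0}^{*})}$, where $\Lambda_{0}^{*} = -w_{0}\Lambda_{0}$ is the highest weight of the dual module, so the stated asymptotic $q^{-4t(\Lambda, \Lambda_{0})}$ requires the comparison $(\Lambda, \Lambda_{0}) \geq (\Lambda, \Lambda_{0}^{*})$ for all large dominant $\Lambda$, equivalently $\max_{\lambda}|(\Lambda,\lambda)| = (\Lambda,\Lambda_{0})$. This holds automatically when $V(\Lambda_{0})$ is self-dual — in particular for the adjoint representation $\Lambda_{0} = \theta$ used for the final results, where $\Lambda_{0}^{*} = \Lambda_{0}$. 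I would therefore either phrase the general estimate with $\max\{(\Lambda, \Lambda_{0}), (\Lambda, \Lambda_{0}^{*})\}$ in the exponent, or invoke the self-duality of the representations to which the lemma is ultimately applied; controlling this highest-versus-lowest-weight competition is the delicate point of a fully general argument.
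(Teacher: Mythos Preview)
Your approach is the same as the paper's: express $\chi_{\Lambda}(\mathcal{C}_{t})$ via \eqref{eq:value-c}, show that the quantum-dimension ratio is $\sim 1$, and then estimate each $a_{t,\lambda}^{2}$ by isolating the dominant power of $q$. The paper factors $a_{t,\lambda}^{2} = q^{-4t(\Lambda,\Lambda_{0})}(\cdots)^{2}$ and argues that both $q^{2t(\Lambda,\Lambda_{0})}r_{\lambda}^{\pm t}$ are bounded, asserting $(\Lambda,\Lambda_{0}+\lambda)\geq 0$ with a reference to the inequality $(\Lambda,\Lambda_{0}-\lambda)\geq 0$ established just before.

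That step is precisely the point you flag: it amounts to applying $(\Lambda,\Lambda_{0}-\lambda)\geq 0$ to the weight $-\lambda$, which is a weight of $V(\Lambda_{0})$ only when the module is self-dual. Your suggested correction---replacing $(\Lambda,\Lambda_{0})$ by $\max\{(\Lambda,\Lambda_{0}),(\Lambda,\Lambda_{0}^{*})\}$ in general---is the right one; for instance with $\mathfrak{g}=\mathfrak{sl}_{3}$, $\Lambda_{0}=\omega_{1}$, and $\Lambda=N\omega_{1}+n\omega_{2}$ with $n\gg N$, the term $\lambda=-\omega_{2}$ gives $a_{t,\lambda}^{2}$ of order $q^{-8tn/3}$, strictly larger than $q^{-4t(\Lambda,\Lambda_{0})}\sim q^{-4tn/3}$. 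The paper does not address this. For the adjoint representation $\Lambda_{0}=\theta$, which is always self-dual, both arguments go through and the stated asymptotic holds; your resolution---either restrict to self-dual $\Lambda_{0}$ or record the symmetric formula---is exactly what is needed.
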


\begin{proof}
The explicit formula for $\chi_{\Lambda}(\mathcal{C}_{t})$ is obtained from equations \eqref{eq:value-c} and \eqref{eq:a-comp}.
First we show that $\dim_{q} V(\Lambda + \lambda) / \dim_{q} V(\Lambda) \sim 1$, where $\lambda$ is a weight of the fixed module $V(\Lambda_{0})$.
Using the product formula for the quantum dimension we have
\begin{equation*}
\frac{\dim_{q} V(\Lambda + \lambda)}{\dim_{q} V(\Lambda)}
= \prod_{\alpha > 0}
\frac{q^{(\Lambda + \lambda + \rho, \alpha)} - q^{-(\Lambda + \lambda + \rho, \alpha)}}{q^{(\Lambda + \rho, \alpha)} - q^{-(\Lambda + \rho, \alpha)}}.
\end{equation*}
Each term in the product can be rewritten as
\begin{equation*}
\frac{q^{2(\Lambda + \rho, \alpha)}
q^{(\lambda, \alpha)}
- q^{- (\lambda, \alpha)}}{q^{2(\Lambda + \rho, \alpha)} - 1}.
\end{equation*}
Since $0 < q < 1$, the term $q^{2(\Lambda + \rho, \alpha)}$ goes to zero for $n \to \infty$, which shows the claim.

Next we show that $a_{t, \lambda}^{2} \sim q^{-4 t (\Lambda, \Lambda_{0})}$, where $a_{t, \lambda}$ is given by \eqref{eq:a-comp}.
Consider the irreducible representation of highest weight $\Lambda_{0}$. Every weight $\lambda$ of this representation can be written in the form $\lambda = \Lambda_{0} - \sum_{i = 1}^{r} c_{i} \alpha_{i}$, where $c_{i}$ are positive integers and $\alpha_{i}$ are the simple roots, see \cite[Theorem 5.5]{knapp}. Then it immediately follows that $(\Lambda, \Lambda_{0}) \geq (\Lambda, \lambda)$ for any such $\lambda$, since the difference $\sum_{i = 1}^{r} c_{i} (\Lambda, \alpha_{i})$ is non-negative.
Now rewrite $a_{t, \alpha}^{2}$ as follows
\begin{equation*}
a_{t, \lambda}^{2} = q^{- 4 t (\Lambda, \Lambda_{0})} \left( \frac{q^{2 t (\Lambda, \Lambda_{0})} r_{\lambda}^{t}
- q^{2 t (\Lambda, \Lambda_{0})} r_{\lambda}^{-t}}{q - q^{-1}} \right)^{2}.
\end{equation*}
The product $q^{2t (\Lambda, \Lambda_{0})} r_{\lambda}^{t}$ is given explicitely by
\begin{equation*}
q^{2t (\Lambda, \Lambda_{0})} r_{\lambda}^{t}
= q^{2t (\Lambda, \Lambda_{0} + \lambda)}
q^{t (\lambda, \lambda + 2\rho) - t (\Lambda_{0}, \Lambda_{0} + 2\rho)}.
\end{equation*}
As shown above we have $(\Lambda, \Lambda_{0} + \lambda) \geq 0$ for any weight $\lambda$ of $V(\Lambda_{0})$. Therefore we have $q^{2t (\Lambda, \Lambda_{0} + \lambda)} \leq 1$, since $0 < q < 1$. This fact implies the inequalities
\begin{equation*}
0 < q^{2t (\Lambda, \Lambda_{0})} r_{\lambda}^{t} \leq
q^{t (\lambda, \lambda + 2\rho) - t (\Lambda_{0}, \Lambda_{0} + 2\rho)}.
\end{equation*}
Similar inequalities hold for the product $q^{2t (\Lambda, \Lambda_{0})} r_{\lambda}^{-t}$. From these estimates we can conclude that the big term in parentheses in the expression for $a_{t, \lambda}^{2}$ is $\sim 1$.
From this in turn we obtain that $a_{t, \lambda}^{2} \sim q^{- 4 t (\Lambda, \Lambda_{0})}$, from which the conclusion follows.
\end{proof}

\section{Results on the spectral dimension}
\label{section:spec-dim}

In this section we give some general results on the spectral dimension associated to the zeta function $\zeta_{\mathcal{C}_{t}}$.
They allow to compute this number in terms of inner products of weights of a given Lie algebra.
First we prove the result in the simple case.
Then we show that the spectral dimension of a product space is the sum of the spectral dimensions.

\subsection{Simple case}

We start by introducing some notation.

\begin{notation}
\label{not:inner-prod}
Let $G$ be a simple Lie group. Let $\Lambda = \sum_{k = 1}^{r} n_{k} \omega_{k}$ be a dominant weight and fix an an arbitrary representation of highest weight $\Lambda_{0}$.
Then we will write
\begin{equation*}
(\Lambda, 2 \rho) = \sum_{k = 1}^{r} a_{k} n_{k}, \quad
(\Lambda, \Lambda_{0}) = \sum_{k = 1}^{r} b_{k} n_{k}.
\end{equation*}
\end{notation}

Recall that $\mathcal{C}_{t}$ is the central element defined in \eqref{eq:def-cas}.

\begin{theorem}\label{thm:spec-dim}
Let $\zeta_{\mathcal{C}_{t}}$ be the zeta function corresponding to the central element $\mathcal{C}_{t}$. Then the corresponding spectral dimension is given by $p = \frac{1}{t} \max \{ \frac{a_{k}}{b_{k}} \}$.
\end{theorem}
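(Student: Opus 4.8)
The plan is to turn $\zeta_{\mathcal{C}_{t}}$ into an explicit series over dominant weights and then reduce the question of its convergence to that of a product of geometric series. Using the Peter--Weyl form of the zeta function recorded earlier in \autoref{section:integration}, together with the multiplicativity $\dim_{q}(V \otimes W) = \dim_{q} V \cdot \dim_{q} W$ and $\dim_{q} V(\Lambda)^{*} = \dim_{q} V(\Lambda)$, the trace of $\modular$ over the component $V(\Lambda) \otimes V(\Lambda)^{*}$ equals $(\dim_{q} V(\Lambda))^{2}$. Hence I would write
\[
\zeta_{\mathcal{C}_{t}}(z) = \sum_{\Lambda \in P^{+}} (\dim_{q} V(\Lambda))^{2} \, \chi_{\Lambda}(\mathcal{C}_{t})^{-z/2}.
\]
Since the spectral dimension is defined through real arguments and every summand is positive, it suffices to analyze $z = s \in \mathbb{R}$, $s \geq 0$; the passage to the half-plane $\mathrm{Re}(z) > p$ is then covered by the holomorphicity established earlier.

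Next I would feed in the two asymptotic estimates. By \autoref{lem:qdim} we have $(\dim_{q} V(\Lambda))^{2} \sim q^{-2 (\Lambda, 2\rho)}$, and by \autoref{lemma:cas-ineq} we have $\chi_{\Lambda}(\mathcal{C}_{t})^{-s/2} \sim q^{2 t s (\Lambda, \Lambda_{0})}$, both in the sense of \autoref{nota:asymp}. Multiplying, the general term of the series satisfies
\[
(\dim_{q} V(\Lambda))^{2} \, \chi_{\Lambda}(\mathcal{C}_{t})^{-s/2} \sim q^{\, 2 t s (\Lambda, \Lambda_{0}) - 2 (\Lambda, 2\rho)}.
\]
Writing $\Lambda = \sum_{k} n_{k} \omega_{k}$ and inserting the expansions $(\Lambda, 2\rho) = \sum_{k} a_{k} n_{k}$ and $(\Lambda, \Lambda_{0}) = \sum_{k} b_{k} n_{k}$ from \autoref{not:inner-prod}, the exponent becomes $\sum_{k} (2 t s\, b_{k} - 2 a_{k}) n_{k}$, so that the model term factorizes as $\prod_{k} \big(q^{\,2 t s\, b_{k} - 2 a_{k}}\big)^{n_{k}}$.

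The convergence analysis is then elementary. Because all summands are positive and the comparison above is two-sided, $\zeta_{\mathcal{C}_{t}}(s)$ converges if and only if the model series $\sum_{n \in \mathbb{N}^{r}} \prod_{k} (q^{c_{k}})^{n_{k}}$ converges, where $c_{k} = 2 t s\, b_{k} - 2 a_{k}$. This is a product of $r$ independent geometric series over $\mathbb{N}$, which converges exactly when $q^{c_{k}} < 1$ for every $k$; since $0 < q < 1$ this means $c_{k} > 0$, i.e. $s > a_{k} / (t b_{k})$ for all $k$ (the $a_{k}$ and $b_{k}$ being positive for the representations considered). Thus the series converges precisely for $s > \tfrac{1}{t} \max_{k} \{ a_{k}/b_{k} \}$ and diverges at and below this value, since at equality the factor indexed by a maximizer reduces to $\sum_{n} 1$. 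Taking the infimum over such $s$ gives $p = \tfrac{1}{t} \max_{k} \{ a_{k}/b_{k} \}$.

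The step requiring the most care is this reduction to the model geometric series: I must make sure that the asymptotic relations of \autoref{lem:qdim} and \autoref{lemma:cas-ineq} hold with constants that are \emph{uniform} over the whole cone $P^{+}$, not merely along its interior. This is what legitimizes the termwise comparison used to detect divergence along the boundary faces and coordinate rays of the chamber, where some $n_{k}$ stay bounded while others grow. Fortunately the explicit product formulas behind both lemmas yield such uniform two-sided bounds directly, so the comparison with $\prod_{k}(q^{c_{k}})^{n_{k}}$ is valid in every direction; the finitely many small weights, where the asymptotics might fail, contribute only a finite amount and do not affect the abscissa of convergence.
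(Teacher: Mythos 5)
Your proposal is correct and follows essentially the same route as the paper's proof: the Peter--Weyl expansion of $\zeta_{\mathcal{C}_{t}}$ with multiplicativity of the quantum dimension, the asymptotics of \autoref{lem:qdim} and \autoref{lemma:cas-ineq}, reduction to real $s$, and comparison with a product of geometric series in the coordinates $n_{k}$. The only genuine addition is your closing remark that the two-sided bounds must hold with constants uniform over all of $P^{+}$ (including the boundary faces of the dominant cone, where some $n_{k}$ stay bounded) --- a point the paper leaves implicit but which, as you note, the explicit product formulas behind both lemmas do supply.
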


\begin{proof}
The quantum analogue of the Peter-Weyl decomposition for $\mathbb{C}_{q}[G]$ is given by
\begin{equation*}
\mathbb{C}_{q}[G] = \bigoplus_{\Lambda \in P^{+}} V(\Lambda) \otimes V(\Lambda)^{*}.
\end{equation*}
Moreover for the quantum dimension we have $\dim_{q}(V \otimes W) = \dim_{q}V \cdot \dim_{q}W$.
From these properties it follows that we can write the trace as
\begin{equation*}
\zeta_{\mathcal{C}_{t}}(z)
= \mathrm{Tr}(\mathcal{C}_{t}^{-z/2} \modular) = \sum_{\Lambda \in P^{+}} (\dim_{q}V(\Lambda))^{2} \chi_{\Lambda}(\mathcal{C}_{t})^{-z/2}.
\end{equation*}
To find the spectral dimension we can restrict to real values $z = s \in \mathbb{R}$.

The sum over the dominant weights can be expressed as a sum over the natural numbers $\{ n_{1}, \cdots, n_{r} \}$. Here $r$ is the rank of the Lie algebra $\mathfrak{g}$. We denote these numbers schematically by $\{ n \}$.
Now we look at the asymptotics for large values of $\{ n \}$. From \autoref{lem:qdim} we have $\dim_{q} V(\Lambda) \sim q^{-(\Lambda, 2\rho)}$, while from \autoref{lemma:cas-ineq} we have $\chi_{\Lambda}(\mathcal{C}_{t}) \sim q^{-4t (\Lambda, \Lambda_{0})}$.
Then
\begin{equation*}
\zeta_{\mathcal{C}_{t}}(s)
\sim \sum_{\{ n \}}
q^{- \sum_{k = 1}^{r} 2 a_{k} n_{k}}
\left( q^{\sum_{k = 1}^{r} - 4 t b_{k} n_{k}} \right)^{-s/2}
= \sum_{\{ n \}}
q^{\sum_{k = 1}^{r} 2(s t b_{k} - a_{k}) n_{k}}.
\end{equation*}
We are left with several geometric series.
These sums are finite provided that $s > \frac{1}{t} \frac{a_{k}}{b_{k}}$ for all $k \in \{ 1, \cdots, r \}$. Then $\zeta_{\mathcal{C}_{t}}(s)$ is finite if $s > \frac{1}{t} \max \{ \frac{a_{k}}{b_{k}} \}$, from which the result follows.
\end{proof}

Notice that the spectral dimension depends on the parameter $t$. This is in contrast with the classical case, where different values of $t$ give the same spectral dimension.

\subsection{Product case}
\label{sec:product}

We will briefly consider the case of spaces which are products of simple Lie groups $G_{1} \times \cdots \times G_{n}$. It is enough to consider the case of two factors $G_{1} \times G_{2}$, with the general case being treated similarly.
This product space can be quantized in a straightforward way and we want to compute its spectral dimension.

First we have to recall how to treat products of non-commutative spaces. The algebra will be the tensor product of the corresponding quantized algebras $\mathbb{C}_{q}[G_{1}]$ and $\mathbb{C}_{q}[G_{2}]$. Similarly the Hilbert space will be the completion of this algebra with respect to the Haar states.
Then we take $\mathcal{C}_{t} = \mathcal{C}_{t, 1} \otimes 1 + 1 \otimes \mathcal{C}_{t, 2}$, which for ease of notation we write as $\mathcal{C}_{t} = \mathcal{C}_{t, 1} + \mathcal{C}_{t, 2}$. Finally for the modular operator we set $\modular = \modular^{(1)} \otimes \modular^{(2)}$.

It can be easily proven (and holds in full generality) that the dimension in this case is less or equal than the sum of the dimensions, as we show in the next lemma. 

\begin{lemma}
Let $p_{i}$ be the spectral dimension of $\zeta_{\mathcal{C}_{t, i}}(z) = \mathrm{Tr}(\mathcal{C}_{t, i}^{-z/2} \modular^{(i)})$ for $i = 1, 2$.
If we denote by $p$ the spectral dimension of $\zeta_{\mathcal{C}_{t}}(z) = \mathrm{Tr}(\mathcal{C}_{t}^{-z/2} \modular)$, then we have $p \leq p_{1} + p_{2}$.
\end{lemma}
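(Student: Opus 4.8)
The plan is to write the product zeta function explicitly using the Peter--Weyl decomposition of both factors, and then to exploit an elementary splitting inequality that reduces the convergence of $\zeta_{\mathcal{C}_{t}}$ to the convergence of the two factor zeta functions.

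First I would observe that the Hilbert space of the product is $L^{2}_{q}(G_{1}) \otimes L^{2}_{q}(G_{2})$, which decomposes according to the two Peter--Weyl theorems as a sum over pairs $(\Lambda_{1}, \Lambda_{2})$ of dominant weights. Since $\modular = \modular^{(1)} \otimes \modular^{(2)}$ and $\mathcal{C}_{t} = \mathcal{C}_{t, 1} \otimes 1 + 1 \otimes \mathcal{C}_{t, 2}$, the operator $\mathcal{C}_{t}$ acts on the $(\Lambda_{1}, \Lambda_{2})$ sector as the scalar $\chi_{\Lambda_{1}}(\mathcal{C}_{t, 1}) + \chi_{\Lambda_{2}}(\mathcal{C}_{t, 2})$, while the trace of $\modular$ over that sector factorizes as $(\dim_{q} V(\Lambda_{1}))^{2} (\dim_{q} V(\Lambda_{2}))^{2}$. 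Restricting to real $s$, I would therefore write
\begin{equation*}
\zeta_{\mathcal{C}_{t}}(s) = \sum_{\Lambda_{1}, \Lambda_{2}} (\dim_{q} V(\Lambda_{1}))^{2} (\dim_{q} V(\Lambda_{2}))^{2} \left( \chi_{\Lambda_{1}}(\mathcal{C}_{t, 1}) + \chi_{\Lambda_{2}}(\mathcal{C}_{t, 2}) \right)^{-s/2}.
\end{equation*}

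The key step --- and the only real obstacle --- is that the central element is a \emph{sum}, so the power $(\,\cdot\,)^{-s/2}$ does not factorize the way it would for a product operator. To get around this I would use positivity of the two eigenvalues (which holds since each $\mathcal{C}_{t, i}$ is a positive operator) together with the elementary inequality: for $a, b > 0$ and any splitting $s = s_{1} + s_{2}$ with $s_{1}, s_{2} \geq 0$,
\begin{equation*}
(a + b)^{-s/2} = (a + b)^{-s_{1}/2} (a + b)^{-s_{2}/2} \leq a^{-s_{1}/2} b^{-s_{2}/2},
\end{equation*}
using $a + b \geq a$ and $a + b \geq b$. Applying this term by term with $a = \chi_{\Lambda_{1}}(\mathcal{C}_{t, 1})$ and $b = \chi_{\Lambda_{2}}(\mathcal{C}_{t, 2})$ factorizes the double sum into the product $\zeta_{\mathcal{C}_{t, 1}}(s_{1}) \cdot \zeta_{\mathcal{C}_{t, 2}}(s_{2})$.

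Finally I would conclude by a suitable choice of the splitting. Given any $s_{1} > p_{1}$ and $s_{2} > p_{2}$, both factor zeta functions are finite by definition of the spectral dimension, so $\zeta_{\mathcal{C}_{t}}(s_{1} + s_{2})$ is finite. Since $s_{1}, s_{2}$ may be taken arbitrarily close to $p_{1}, p_{2}$, the function $\zeta_{\mathcal{C}_{t}}(s)$ is finite for every $s > p_{1} + p_{2}$, and taking the infimum over such $s$ yields $p \leq p_{1} + p_{2}$. I expect the only point needing care to be the bookkeeping of the eigenvalue and multiplicity structure on the tensor product sectors; the analytic content is entirely contained in the splitting inequality above.
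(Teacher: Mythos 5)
Your proposal is correct and is essentially the paper's proof: the paper applies the very same splitting inequality, only formulated at the operator level as $(\mathcal{C}_{t,1}+\mathcal{C}_{t,2})^{-1} \leq \mathcal{C}_{t,1}^{-p_{1}/(p_{1}+p_{2})}\,\mathcal{C}_{t,2}^{-p_{2}/(p_{1}+p_{2})}$ (the fixed proportional choice of your splitting $s = s_{1} + s_{2}$), then takes traces against $\modular = \modular^{(1)} \otimes \modular^{(2)}$. Since the two central elements act diagonally on the Peter--Weyl sectors and commute as tensor-factor operators, your term-by-term eigenvalue argument and the paper's operator inequality are the same computation.
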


\begin{proof}
Since $\mathcal{C}_{t, 1}$ and $\mathcal{C}_{t, 2}$ are positive operators, we have the obvious inequality
\begin{equation*}
\begin{split}
(\mathcal{C}_{t, 1} + \mathcal{C}_{t, 2})^{-1} &
= (\mathcal{C}_{t, 1} + \mathcal{C}_{t, 2})^{-p_{1}/(p_{1} + p_{2})} (\mathcal{C}_{t, 1} + \mathcal{C}_{t, 2})^{-p_{2}/(p_{1} + p_{2})} \\
& \leq \mathcal{C}_{t, 1}^{-p_{1}/(p_{1} + p_{2})} \mathcal{C}_{t, 2}^{-p_{2}/(p_{1} + p_{2})}.
\end{split}
\end{equation*}
Again it is enough to consider the case $z = s \in \mathbb{R}$.
Then we get
\begin{equation*}
\mathrm{Tr}(\mathcal{C}_{t}^{-s/2} \modular) \leq
\mathrm{Tr}(\mathcal{C}_{t, 1}^{-s/2 \ p_{1} /(p_{1} + p_{2})} \modular^{(1)})
\mathrm{Tr}(\mathcal{C}_{t, 2}^{-s/2 \ p_{2} /(p_{1} + p_{2})} \modular^{(2)}).
\end{equation*}
It follows that the right-hand side is finite for $s > p_{1} + p_{2}$. This implies $p \leq p_{1} + p_{2}$.
\end{proof}

The previous lemma does not guarantee that $p = p_{1} + p_{2}$, since in principle it might happen that $\mathcal{C}^{-z/2} \modular$ is trace-class for $z = p$, for example.
On the other hand, with a bit more work we can actually show that $p = p_{1} + p_{2}$ holds.
This is the content of the next proposition, where we give an elementary proof of this fact using the explicit form of our zeta function.

\begin{proposition}
With the same notation as above, we have $p = p_{1} + p_{2}$.
\end{proposition}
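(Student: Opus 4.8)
The plan is to prove the reverse inequality $p \geq p_1 + p_2$, which combined with the previous lemma yields equality. The key observation is that our zeta functions have a very explicit form as sums of products of quantum dimensions and eigenvalues, so I can work directly with the series rather than with abstract operator inequalities. Using the Peter--Weyl decomposition for the product space $\mathbb{C}_{q}[G_1] \otimes \mathbb{C}_{q}[G_2]$, together with the multiplicativity of the quantum dimension and the fact that the eigenvalues of $\mathcal{C}_{t} = \mathcal{C}_{t,1} + \mathcal{C}_{t,2}$ on an irreducible component $V(\Lambda^{(1)}) \otimes V(\Lambda^{(2)})$ are simply the sums $\chi_{\Lambda^{(1)}}(\mathcal{C}_{t,1}) + \chi_{\Lambda^{(2)}}(\mathcal{C}_{t,2})$, I would write
\begin{equation*}
\zeta_{\mathcal{C}_{t}}(s) = \sum_{\Lambda^{(1)}, \Lambda^{(2)}} (\dim_{q} V(\Lambda^{(1)}))^{2} (\dim_{q} V(\Lambda^{(2)}))^{2} \left( \chi_{\Lambda^{(1)}}(\mathcal{C}_{t,1}) + \chi_{\Lambda^{(2)}}(\mathcal{C}_{t,2}) \right)^{-s/2}.
\end{equation*}

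Next I would reduce to the level of the asymptotics established earlier. By \autoref{lem:qdim} and \autoref{lemma:cas-ineq}, each factor behaves like a power of $q$ in the corresponding variables $\{ n^{(1)} \}$ and $\{ n^{(2)} \}$, and the eigenvalues grow like $q^{-4t(\Lambda^{(i)}, \Lambda_0^{(i)})}$. The essential point is a lower bound on the sum of two positive quantities: for $a, b > 0$ one has $(a + b)^{-s/2} \geq C \min\{ a^{-s/2}, b^{-s/2} \}$ up to a constant, and more usefully $(a+b)^{-s/2} \geq 2^{-s/2} \max\{a,b\}^{-s/2}$. This lets me bound the product series from below by a sum that decouples the two sets of variables only partially. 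The cleaner route is to observe that if $s < p_1 + p_2$ then I can choose $s_1 < p_1$ and $s_2 < p_2$ with a suitable relation so that the joint series dominates a product of two divergent series; concretely I would restrict the double sum to the diagonal-type region where $\chi_{\Lambda^{(1)}}(\mathcal{C}_{t,1}) \sim \chi_{\Lambda^{(2)}}(\mathcal{C}_{t,2})$, on which $(\chi_1 + \chi_2)^{-s/2} \sim \chi_1^{-s/2}$, reducing the problem to showing divergence of a single-factor-type series at the critical exponent.

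The main obstacle is handling the cross term $(\chi_1 + \chi_2)^{-s/2}$, which does not factor. I expect the cleanest argument to proceed by contradiction: suppose $p < p_1 + p_2$ and pick $s$ with $p < s < p_1 + p_2$, so that $\zeta_{\mathcal{C}_{t}}(s) < \infty$. Since each eigenvalue $\chi_{\Lambda^{(i)}}(\mathcal{C}_{t,i}) \geq c > 0$ is bounded below, I have the pointwise bound $(\chi_1 + \chi_2)^{-s/2} \geq \chi_1^{-s_1/2} \chi_2^{-s_2/2} \cdot (\text{const})$ whenever $s_1 + s_2 = s$ and $\chi_1, \chi_2$ lie in comparable ranges; choosing $s_1, s_2$ with $s_1 < p_1$, $s_2 < p_2$ forces at least one of the constituent series $\zeta_{\mathcal{C}_{t,i}}(s_i)$ to be handled on the region where it diverges, contradicting finiteness of the double sum. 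I would make this precise by splitting the index set according to which of $\chi_1, \chi_2$ is larger and estimating each piece separately, using in each piece that the larger eigenvalue controls the sum so that $(\chi_1 + \chi_2)^{-s/2} \sim \max\{\chi_1, \chi_2\}^{-s/2}$. Verifying that the resulting single-variable series diverges for $s/2$ below the critical threshold is then a routine geometric-series estimate of exactly the type carried out in the proof of \autoref{thm:spec-dim}, and it is the bookkeeping of the two regions, rather than any genuine difficulty, that constitutes the bulk of the work.
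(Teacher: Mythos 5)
Your proposal is correct in substance, and it lives in the same elementary framework as the paper's proof: both establish the missing inequality $p \geq p_{1} + p_{2}$ by bounding the explicit double series from below on a sub-region of the index set where the non-multiplicative term $(\chi_{1} + \chi_{2})^{-s/2}$ becomes comparable to a single eigenvalue power, followed by geometric-series bookkeeping. The difference is in how the second factor raises the divergence threshold from $p_{1}$ to $p_{1} + p_{2}$. The paper rescales variables $c_{i,k} = b_{i,k} n_{i,k}$, truncates to the componentwise cone $\{ c_{2,k} \leq c_{1,k} \}$ (where the first eigenvalue dominates), and then actually performs the inner sums: the accumulated quantum-dimension mass $\sum_{c_{2,k} = 0}^{c_{1,k}} q^{-2 (a_{2,k}/b_{2,k}) c_{2,k}} \sim q^{-2 (a_{2,k}/b_{2,k}) c_{1,k}}$ is what shifts the critical exponent. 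You instead restrict to a diagonal region $\chi_{1} \sim \chi_{2}$ and split the exponent $s = s_{1} + s_{2}$ with $s_{i} < p_{i}$; this also works and is arguably leaner, since along a matched ray in the max-ratio directions (say $n_{1,1} = b_{2,1} m$ and $n_{2,1} = b_{1,1} m$, so that $c_{1,1} = c_{2,1}$) the general term behaves like $q^{2 t (s - p_{1} - p_{2}) b_{1,1} b_{2,1} m}$, which is unbounded for $s < p_{1} + p_{2}$, and unboundedness of the terms already forces divergence, which is all that the definition of the spectral dimension as an infimum requires. Two cautions. First, your intermediate claim that the restricted series ``dominates a product of two divergent series'' is not literally achievable: the diagonal constraint couples the two index sets, so one obtains a single divergent series of matched products rather than a product of series; your own ray argument is the correct repair, and the check that the asymptotics of \autoref{lem:qdim} and \autoref{lemma:cas-ineq} remain valid along such rays is routine. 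Second, you do not need to split the index set by which eigenvalue is larger and ``estimate each piece separately'': all terms are positive, so a lower bound from a single region suffices, exactly as the paper uses only the piece $\{ c_{2} \leq c_{1} \}$ — but note there that a crude bound discarding the $\Lambda^{(2)}$-sum would only yield divergence for $s < p_{1}$, so the second factor's contribution must enter through one of the two mechanisms above.
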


\begin{proof}
We start by outlining the strategy of the proof. First write $\zeta_{\mathcal{C}_{t}}(s) \sim A_{t}(s)$, where
\begin{equation*}
A_{t}(s)
= \sum_{ \{ n_{1} \} } \sum_{ \{ n_{2} \} }
q^{- \sum_{k = 1}^{r_{1}} 2 a_{1, k} n_{1, k}}
q^{- \sum_{k = 1}^{r_{2}} 2 a_{2, k} n_{2, k}}
\left(
q^{-\sum_{k = 1}^{r_{1}} 4 t b_{1, k} n_{1, k}}
+ q^{-\sum_{k = 1}^{r_{2}} 4 t b_{2, k} n_{2, k}}
\right)^{-s/2}.
\end{equation*}
This expression is analogue to the one given at the end of the proof of \autoref{thm:spec-dim}. To obtain it we simply use similar techniques to those of \autoref{lem:qdim} and \autoref{lemma:cas-ineq}.
Now $\zeta_{\mathcal{C}_{t}}(s) \sim A_{t}(s)$ implies in particular that $K A_{t}(s) \leq \zeta_{\mathcal{C}_{t}}(s)$ for some constant $K$.
Define $\tilde{A}_{t}(s)$ by truncating the sum over the numbers $\{ n_{2} \}$ in $A_{t}(s)$ to a finite sum.
Then clearly $K \tilde{A}_{t}(s) \leq \zeta_{\mathcal{C}_{t}}(s)$.
By choosing the truncation appropriately, we will show that $\tilde{A}_{t}(s)$ has a singularity at $s = p_{1} + p_{2}$.
Then the same will be true for $\zeta_{\mathcal{C}_{t}}(s)$, so that $p = p_{1} + p_{2}$.

We fix some further notations and conventions. We write $a_{i, k}$ and $b_{i, k}$, with $i = 1, 2$, as in \autoref{not:inner-prod}. We can assume, without loss of generality, that our labeling is such that $a_{i, 1} / b_{i, 1} \geq \cdots \geq a_{i, r_{i}} / b_{i, r_{i}}$.
Here $r_{i}$ denotes the rank of the Lie algebra $\mathfrak{g}_{i}$.
Then from \autoref{thm:spec-dim} we have that $p_{i} = \frac{1}{t} a_{i, 1} / b_{i,  1}$.
It is convenient to assume that $r_{1} \geq r_{2}$.
Finally define the numbers $c_{i, k} = b_{i, k} n_{i, k}$. The reason for this last definition will be clear shortly.
If we rewrite $A_{t}(s)$ as a sum over the numbers $c_{i, k}$ then it takes the form
\begin{equation*}
A_{t}(s)
= \sum_{ \{ c_{1} \} } \sum_{ \{ c_{2} \} }
q^{- \sum_{k = 1}^{r_{1}} 2 \frac{a_{1, k}}{b_{1, k}} c_{1, k}}
q^{- \sum_{k = 1}^{r_{2}} 2 \frac{a_{2, k}}{b_{2, k}} c_{2, k}}
\left(
q^{-\sum_{k = 1}^{r_{1}} 4 t c_{1, k}}
+ q^{-\sum_{k = 1}^{r_{2}} 4 t c_{2, k}}
\right)^{-s/2}.
\end{equation*}

Now we are ready to define the truncation $\tilde{A}_{t}(s)$. Consider the term in parentheses in the expression above for $A_{t}(s)$. We can rewrite it as follows
\begin{equation*}
q^{-\sum_{k = 1}^{r_{1}} 4 t c_{1, k}}
+ q^{-\sum_{k = 1}^{r_{2}} 4 t c_{2, k}}
= q^{-\sum_{k = 1}^{r_{1}} 4 t c_{1, k}} \left( 1 + q^{\sum_{k = 1}^{r_{2}} 4 t (c_{1, k} - c_{2, k})} q^{\sum_{k = r_{2} + 1}^{r_{1}} 4 t c_{1, k}} \right).
\end{equation*}
Here we are using that $r_{1} \geq r_{2}$. Now suppose that $c_{1, k} \geq c_{2, k}$ for each $k$. Then it is clear that the term in parentheses is bounded by two positive constants, since $0 < q < 1$, or in other words this term is $\sim 1$.
Then we define $\tilde{A}_{t}(s)$ by truncating the sum to $\{ c_{2} \leq c_{1} \}$.
The outcome of this discussion is that
\begin{equation*}
\begin{split}
\tilde{A}_{t}(s) & =
\sum_{ \{ c_{1} \} } \sum_{ \{ c_{2} \leq c_{1} \} }
q^{- \sum_{k = 1}^{r_{1}} 2 \frac{a_{1, k}}{b_{1, k}} c_{1, k}}
q^{- \sum_{k = 1}^{r_{2}} 2 \frac{a_{2, k}}{b_{2, k}} c_{2, k}} \\
& \times
q^{\sum_{k = 1}^{r_{1}} 2 s t c_{1, k}}
\left(
1 + q^{\sum_{k = 1}^{r_{2}} 4 t (c_{1, k} - c_{2, k})} q^{\sum_{k = r_{2} + 1}^{r_{1}} 4 t c_{1, k}}
\right)^{-s/2} \\
& \sim \sum_{ \{ c_{1} \} } \sum_{ \{ c_{2} \leq c_{1} \} }
q^{ \sum_{k = 1}^{r_{1}} 2 \left( s t - \frac{a_{1, k}}{b_{1, k}} \right) c_{1, k} }
q^{- \sum_{k = 1}^{r_{2}} 2 \frac{a_{2, k}}{b_{2, k}} c_{2, k}}.
\end{split}
\end{equation*}

Now we can easily compute the sums over $\{ c_{2} \}$. Indeed for each $k$ we have
\begin{equation*}
\sum_{c_{2, k} = 0}^{c_{1, k}} q^{- 2 \frac{a_{2, k}}{b_{2, k}} c_{2, k}}
\sim q^{- 2 \frac{a_{2, k}}{b_{2, k}} c_{1, k}},
\end{equation*}
which is valid for large values of $c_{1, k}$.
Using this result repeatedly we get
\begin{equation*}
\begin{split}
\tilde{A}_{t}(s)
& \sim \sum_{ \{ c_{1} \} }
q^{ \sum_{k = 1}^{r_{1}} 2 \left( s t - \frac{a_{1, k}}{b_{1, k}} \right) c_{1, k} }
q^{- \sum_{k = 1}^{r_{2}} 2 \frac{a_{2, k}}{b_{2, k}} c_{1, k}} \\
& = \sum_{ \{ c_{1} \} }
q^{ \sum_{k = 1}^{r_{2}} 2 \left( s t - \frac{a_{1, k}}{b_{1, k}} - \frac{a_{2, k}}{b_{2, k}} \right) c_{1, k} }
q^{ \sum_{k = r_{2} + 1}^{r_{1}} 2 \left( s t - \frac{a_{1, k}}{b_{1, k}} \right) c_{1, k} }.
\end{split}
\end{equation*}
These sums are convergent provided that the various exponents are positive. Now observe that, due to our labeling conventions for the coefficients $a_{i, k}$ and $b_{i, k}$, we have
\begin{equation*}
a_{1, k} / b_{1, k} + a_{2, k} / b_{2, k} \geq a_{1, k + 1} / b_{1, k + 1} + a_{2, k + 1} / b_{2, k + 1}.
\end{equation*}
These inequalities imply that all the sums are convergent provided that $s t > a_{1, 1} / b_{1, 1} + a_{2, 1} / b_{2, 1}$.
But this condition is equivalent to $s > p_{1} + p_{2}$, since we had $p_{i} = \frac{1}{t} a_{i, 1} / b_{i,  1}$.
This shows that $\tilde{A}_{t}(s)$ has a singularity at $s = p_{1} + p_{2}$, which concludes the proof.
\end{proof}

\section{Computations for quantum groups}
\label{section:comp-groups}

In this section we will perform the computations relevant for quantum groups. These will mainly be used in the following sections for the flag manifold case.

\subsection{Conventions for the roots}

In order to perform concrete computations, it is convenient to embed the weight space into some vector space $\mathcal{E}$, whose dimension depends on the Lie algebra in consideration.
Then the roots can be written in terms of an orthonormal basis of $\mathcal{E}$, which we denote by $\{ e_{i} \}$ in the table below.
Our conventions coincide with those of the book \cite[Section 1.5]{fuc} and of the Mathematica package LieART \cite{lieart}, which can be easily used to check the results we will present.
We will restrict to those simple Lie algebras that will appear in the flag manifold case.

\begin{table}[!ht]

\begin{center}
\begin{tabular}{|c|c|c|c|}
\hline 
$\mathfrak{g}$ & $\dim(\mathcal{E})$ & simple roots & positive roots \tabularnewline
\hline 
\hline

$A_{r}$ &
$r+1$ &
$e_{i} - e_{i + 1}, \quad 1 \leq i \leq r$ &
$e_{i} - e_{j}, \quad 1 \leq i < j \leq r + 1$
\tabularnewline
\hline

$B_{r}$ &
$r$ &
$e_{i} - e_{i + 1}, \quad 1 \leq i \leq r - 1,$ &
$e_{i} \pm e_{j}, \quad 1 \leq i < j \leq r,$
\tabularnewline
& &
$e_{r}$ &
$e_{i}, \quad 1 \leq i \leq r$
\tabularnewline
\hline

$C_{r}$ &
$r$ &
$e_{i} - e_{i + 1}, \quad 1 \leq i \leq r - 1,$ &
$e_{i} \pm e_{j}, \quad 1 \leq i < j \leq r,$
\tabularnewline
& &
$2e_{r}$ &
$2e_{i}, \quad 1 \leq i \leq r$
\tabularnewline
\hline

$D_{r}$ &
$r$ &
$e_{i} - e_{i + 1}, \quad 1 \leq i \leq r - 1,$ &
$e_{i} \pm e_{j}, \quad 1 \leq i < j \leq r$
\tabularnewline
& &
$e_{r - 1} + e_{r}$ &
\tabularnewline
\hline

$E_{6}$ &
$8$ &
$-e_{i} + e_{i + 1}, \quad 1 \leq i \leq 4,$ &
$e_{i} \pm e_{j}, \quad 1 \leq j < i \leq 5,$
\tabularnewline
& &
$\frac{1}{2} (e_{1} - \sum_{j = 2}^{7} e_{j} + e_{8}),$ &
$\frac{1}{2} (- \sum_{j = 1}^{5} (\pm) e_{j} - e_{6} - e_{7} + e_{8}),$
\tabularnewline
& &
$e_{1} + e_{2}$ &
\textrm{even number of minus signs}
\tabularnewline
\hline

$E_{7}$ &
$8$ &
$-e_{i} + e_{i + 1}, \quad 1 \leq i \leq 5,$ &
$e_{i} \pm e_{j}, \quad 1 \leq j < i \leq 6,$
\tabularnewline
& &
$\frac{1}{2} (e_{1} - \sum_{j = 2}^{7} e_{j} + e_{8}),$ &
$\frac{1}{2} (- \sum_{j = 1}^{6} (\pm) e_{j} - e_{7} + e_{8}),$
\tabularnewline
& &
$e_{1} + e_{2}$ &
\textrm{odd number of minus signs,}
\tabularnewline
& & &
$-e_{7} + e_{8}$
\tabularnewline
\hline
\end{tabular}
\end{center}

\caption{Expressions for the simple and positive roots.}
\end{table}

The normalization is such that the \emph{short} roots have length squared equal to two.
In the next table we give the expressions for the highest roots in these cases.
We point out that there is a $1/2$ missing in the highest root of $E_{6}$ in \cite[Section 1.5]{fuc}).

\begin{table}[!ht]

\begin{center}
\begin{tabular}{|c||c|c|c|c|c|}
\hline 
$\mathfrak{g}$ & $A_{r}$ & $B_{r}, D_{r}$ & $C_{r}$ & $E_{6}$ & $E_{7}$ \tabularnewline
\hline 
$\theta$ &
$e_{1} - e_{r + 1}$ &
$e_{1} + e_{2}$ &
$2e_{1}$ &
$\frac{1}{2} \left( \sum_{j = 1}^{5} e_{j} - e_{6} - e_{7} + e_{8} \right)$ &
$e_{8} - e_{7}$ \tabularnewline
\hline 
\end{tabular}
\end{center}

\caption{Expressions for the highest roots.}
\end{table}

Finally in \autoref{fig:dynk} we make explicit our ordering for the simple roots.
These conventions will be used throughout the paper. Of course one should keep them in mind when comparing the results presented here with other references.

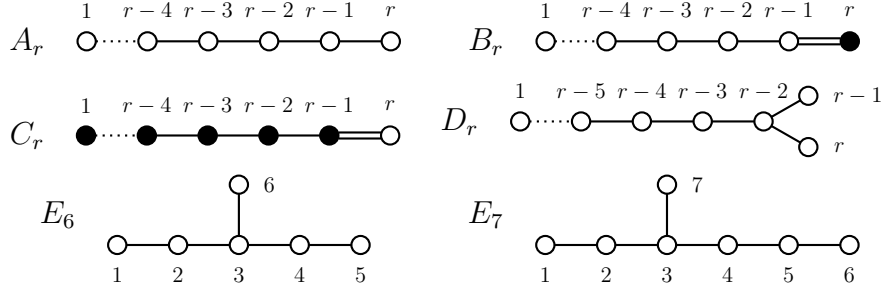
\begin{figure}[!ht]
\centering

\begin{tabular}{cc}

  \begin{tikzpicture}[scale=.4]
    \draw (-1,0) node[anchor=east]  {$A_r$};
    \foreach \x in {0,...,5}
    \draw[xshift=\x cm,thick] (\x cm,0) circle (.3cm);
    \draw[dotted,thick] (0.3 cm,0) -- +(1.4 cm,0);
    \foreach \y in {1.15,...,4.15}
    \draw[xshift=\y cm,thick] (\y cm,0) -- +(1.4 cm,0);
    
  \draw (0,1) node[scale=0.7] {$1$};
  \draw (2,1) node[scale=0.7] {$r-4$};
  \draw (4,1) node[scale=0.7] {$r-3$};
  \draw (6,1) node[scale=0.7] {$r-2$};
  \draw (8,1) node[scale=0.7] {$r-1$};
  \draw (10,1) node[scale=0.7] {$r$};
  \end{tikzpicture}

 & 
 
  \begin{tikzpicture}[scale=.4]
    \draw (-1,0) node[anchor=east]  {$B_r$};
    \foreach \x in {0,...,4}
    \draw[xshift=\x cm,thick] (\x cm,0) circle (.3cm);
    \draw[xshift=5 cm,thick,fill=black] (5 cm, 0) circle (.3 cm);
    \draw[dotted,thick] (0.3 cm,0) -- +(1.4 cm,0);
    \foreach \y in {1.15,...,3.15}
    \draw[xshift=\y cm,thick] (\y cm,0) -- +(1.4 cm,0);
    \draw[thick] (8.3 cm, .1 cm) -- +(1.4 cm,0);
    \draw[thick] (8.3 cm, -.1 cm) -- +(1.4 cm,0);
    
  \draw (0,1) node[scale=0.7] {$1$};
  \draw (2,1) node[scale=0.7] {$r-4$};
  \draw (4,1) node[scale=0.7] {$r-3$};
  \draw (6,1) node[scale=0.7] {$r-2$};
  \draw (8,1) node[scale=0.7] {$r-1$};
  \draw (10,1) node[scale=0.7] {$r$};
  \end{tikzpicture}

 \tabularnewline
 
  \begin{tikzpicture}[scale=.4]
    \draw (-1,0) node[anchor=east]  {$C_r$};
    \foreach \x in {0,...,4}
    \draw[xshift=\x cm,thick,fill=black] (\x cm,0) circle (.3cm);
    \draw[xshift=5 cm,thick] (5 cm, 0) circle (.3 cm);
    \draw[dotted,thick] (0.3 cm,0) -- +(1.4 cm,0);
    \foreach \y in {1.15,...,3.15}
    \draw[xshift=\y cm,thick] (\y cm,0) -- +(1.4 cm,0);
    \draw[thick] (8.3 cm, .1 cm) -- +(1.4 cm,0);
    \draw[thick] (8.3 cm, -.1 cm) -- +(1.4 cm,0);
    
  \draw (0,1) node[scale=0.7] {$1$};
  \draw (2,1) node[scale=0.7] {$r-4$};
  \draw (4,1) node[scale=0.7] {$r-3$};
  \draw (6,1) node[scale=0.7] {$r-2$};
  \draw (8,1) node[scale=0.7] {$r-1$};
  \draw (10,1) node[scale=0.7] {$r$};
  \end{tikzpicture}
 
 & 
 
  \begin{tikzpicture}[scale=.4]
    \draw (-1,0) node[anchor=east]  {$D_r$};
    \foreach \x in {0,...,4}
    \draw[xshift=\x cm,thick] (\x cm,0) circle (.3cm);
    \draw[xshift=8 cm,thick] (30: 17 mm) circle (.3cm);
    \draw[xshift=8 cm,thick] (-30: 17 mm) circle (.3cm);
    \draw[dotted,thick] (0.3 cm,0) -- +(1.4 cm,0);
    \foreach \y in {1.15,...,3.15}
    \draw[xshift=\y cm,thick] (\y cm,0) -- +(1.4 cm,0);
    \draw[xshift=8 cm,thick] (30: 3 mm) -- (30: 14 mm);
    \draw[xshift=8 cm,thick] (-30: 3 mm) -- (-30: 14 mm);
    
  \draw (0,1) node[scale=0.7] {$1$};
  \draw (2,1) node[scale=0.7] {$r-5$};
  \draw (4,1) node[scale=0.7] {$r-4$};
  \draw (6,1) node[scale=0.7] {$r-3$};
  \draw (8,1) node[scale=0.7] {$r-2$};
  \draw (11.1,0.85) node[scale=0.7] {$r-1$};
  \draw (10.5,-0.85) node[scale=0.7] {$r$};
  \end{tikzpicture}
 
 \tabularnewline
 
  \begin{tikzpicture}[scale=.4]
    \draw (-1,1) node[anchor=east]  {$E_6$};
    \foreach \x in {0,...,4}
    \draw[thick,xshift=\x cm] (\x cm,0) circle (3 mm);
    \foreach \y in {0,...,3}
    \draw[thick,xshift=\y cm] (\y cm,0) ++(.3 cm, 0) -- +(14 mm,0);
    \draw[thick] (4 cm,2 cm) circle (3 mm);
    \draw[thick] (4 cm, 3mm) -- +(0, 1.4 cm);
    
  \draw (0,-1) node[scale=0.7] {$1$};
  \draw (2,-1) node[scale=0.7] {$2$};
  \draw (4,-1) node[scale=0.7] {$3$};
  \draw (6,-1) node[scale=0.7] {$4$};
  \draw (8,-1) node[scale=0.7] {$5$};
  \draw (5,2) node[scale=0.7] {$6$};
  \end{tikzpicture}
 
 & 

  \begin{tikzpicture}[scale=.4]
    \draw (-1,1) node[anchor=east]  {$E_7$};
    \foreach \x in {0,...,5}
    \draw[thick,xshift=\x cm] (\x cm,0) circle (3 mm);
    \foreach \y in {0,...,4}
    \draw[thick,xshift=\y cm] (\y cm,0) ++(.3 cm, 0) -- +(14 mm,0);
    \draw[thick] (4 cm,2 cm) circle (3 mm);
    \draw[thick] (4 cm, 3mm) -- +(0, 1.4 cm);
    
  \draw (0,-1) node[scale=0.7] {$1$};
  \draw (2,-1) node[scale=0.7] {$2$};
  \draw (4,-1) node[scale=0.7] {$3$};
  \draw (6,-1) node[scale=0.7] {$4$};
  \draw (8,-1) node[scale=0.7] {$5$};
  \draw (10,-1) node[scale=0.7] {$6$};
  \draw (5,2) node[scale=0.7] {$7$};
  \end{tikzpicture}
 
 \tabularnewline
\end{tabular}

\caption{Dynkin diagrams for the simple Lie algebras of interest.}
\label{fig:dynk}
\end{figure}

\subsection{Inner products}

Now that we have fixed the conventions, we can start computing the relevant inner products.
Recall that from \autoref{thm:spec-dim} we know that these are $(\Lambda, 2 \rho)$, coming from the quantum dimension, and $(\Lambda, \Lambda_{0})$, coming from the operator $\mathcal{C}_{t}$.
Here we will simply state the results, since the computations are not very illuminating.
Once again, we mention that they can be checked with the Mathematica package LieART \cite{lieart}.

We start with the inner product coming from the quantum dimension. Recall that we write $\Lambda = \sum_{k = 1}^{r} n_{k} \omega_{k}$ for a general dominant weight.

\begin{lemma}
\label{lemma:scal-g}
Let $\Lambda$ be a dominant weight. Then $(\Lambda, 2\rho)$ is given by

\begin{equation*}
\begin{array}{ll}
\sum_{k=1}^{r} k (r + 1 - k) n_{k} & \mathfrak{g} = A_{r}, \\

\sum_{k=1}^{r-1} k (2r - k) n_{k} + \frac{1}{2} r^{2} n_{r} & \mathfrak{g} = B_{r}, \\

\sum_{k=1}^{r} k (2r + 1 - k) n_{k} & \mathfrak{g} = C_{r}, \\

\sum_{k=1}^{r-2} k (2r - 1 - k) n_{k} + \frac{1}{2} r(r-1) (n_{r-1} + n_{r}) & \mathfrak{g} = D_{r}, \\

16 n_{1} + 30 n_{2} + 42 n_{3} + 30 n_{4} + 16 n_{5} + 22 n_{6} & \mathfrak{g} = E_{6}, \\
 
34 n_{1} + 66 n_{2} + 96 n_{3} + 75 n_{4} + 52 n_{5} + 27 n_{6} + 49 n_{7} & \mathfrak{g} = E_{7}.
\end{array}
\end{equation*}
\end{lemma}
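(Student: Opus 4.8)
The plan is to reduce the statement to a finite list of inner products and then evaluate those case by case. Since $\Lambda = \sum_{k=1}^{r} n_{k} \omega_{k}$ and the inner product is bilinear, it suffices to determine the coefficients $a_{k} := (\omega_{k}, 2\rho)$: for then $(\Lambda, 2\rho) = \sum_{k=1}^{r} a_{k} n_{k}$ with each $a_{k}$ independent of $\Lambda$. Thus the whole lemma amounts to computing $(\omega_{k}, 2\rho)$ for every fundamental weight $\omega_{k}$ of every Lie algebra in the list.

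To carry this out I would work in the orthonormal basis $\{e_{i}\}$ of $\mathcal{E}$ fixed in the previous subsection. First I would compute $2\rho = \sum_{\alpha > 0} \alpha$ directly from the tables of positive roots; summing coordinates gives the closed forms $2\rho = \sum_{m} (r+2-2m) e_{m}$ for $A_{r}$, $\sum_{m} (2(r-m)+1) e_{m}$ for $B_{r}$, $\sum_{m} 2(r-m+1) e_{m}$ for $C_{r}$, and $\sum_{m} 2(r-m) e_{m}$ for $D_{r}$, together with the analogous finite vectors for $E_{6}$ and $E_{7}$ read off from the exceptional root tables. Next I would express each fundamental weight in the same basis: for the classical series these have the standard closed forms $\omega_{k} = \sum_{i=1}^{k} e_{i}$, with the spinor weights $\omega_{r}$ for $B_{r}$ and $\omega_{r-1}, \omega_{r}$ for $D_{r}$ carrying an extra factor $\frac{1}{2}$, and the $A_{r}$ weights carrying a correction proportional to $\sum_{i} e_{i}$ that is orthogonal to $2\rho$ and hence drops out of the computation. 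Taking the Euclidean inner product then yields each $a_{k}$ by an elementary summation; for example $a_{k} = \sum_{i=1}^{k} (r+2-2i) = k(r+1-k)$ recovers the $A_{r}$ line, and the half-integer coordinates of the spinor weights produce exactly the $\frac{1}{2} r^{2}$ and $\frac{1}{2} r(r-1)$ terms in the $B_{r}$ and $D_{r}$ lines.

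As an independent cross-check, and to explain the uniform pattern, I would record the identity $a_{k} = d_{k} \sum_{\alpha > 0} [\alpha : \alpha_{k}]$, where $d_{k} = (\alpha_{k}, \alpha_{k})/2$ and $[\alpha : \alpha_{k}]$ denotes the coefficient of $\alpha_{k}$ in the expansion of $\alpha$ in simple roots. This follows at once from $(\omega_{k}, \alpha_{j}) = d_{j} \delta_{kj}$ together with $2\rho = \sum_{\alpha > 0} \alpha$. In the simply-laced cases $d_{k} = 1$, so $a_{k}$ is simply the sum over positive roots of the $k$-th simple-root coordinate, which is the quantity one evaluates for $E_{6}$ and $E_{7}$.

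The main obstacle is purely computational and is concentrated in the exceptional cases: unlike the classical series, the fundamental weights of $E_{6}$ and $E_{7}$ have no convenient closed form, so one must invert the $6 \times 6$ and $7 \times 7$ Cartan matrices (equivalently, enumerate all $36$ and $63$ positive roots and sum the relevant coordinate). These calculations are finite but tedious and error-prone, which is precisely why I would verify the resulting coefficients with the LieART package as already mentioned. One must also be careful with the normalization: all inner products are taken with $\{e_{i}\}$ orthonormal, so that the length-squared conventions recorded for the root tables are respected and the factors $d_{k}$ in the combinatorial cross-check are assigned correctly in the non-simply-laced cases $B_{r}$ and $C_{r}$.
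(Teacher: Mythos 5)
Your proposal is correct and takes essentially the same route as the paper, whose proof is simply ``by direct computation'' (with LieART as an independent check): reducing the lemma to the coefficients $a_{k} = (\omega_{k}, 2\rho)$ and evaluating them in the orthonormal-basis conventions of the root tables is exactly the intended calculation, and your closed forms for $2\rho$ and the fundamental weights (including the spinor factors $\tfrac{1}{2}$ and the $A_{r}$ correction orthogonal to $2\rho$) reproduce every line of the statement. Your cross-check $a_{k} = d_{k} \sum_{\alpha > 0} [\alpha : \alpha_{k}]$ is sound and correctly identifies the $E_{6}$, $E_{7}$ values as the coefficients of $2\rho$ expanded in simple roots.
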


\begin{proof}
By direct computation.
\end{proof}

Now, before computing the inner product $(\Lambda, \Lambda_{0})$, clearly we have to make a choice for the fixed representation $\Lambda_{0}$. Here we will present the results in the case of the fundamental and adjoint representations, which are the two natural ones to consider.

We start with the fundamental representation. We denote the highest weight of this representation by $\Lambda_{F}$. In terms of the Dynkin labels it is given by $(1, 0, \cdots, 0)$ for the simple Lie algebras $A_{r}$ (for $r \geq 1$), $B_{r}$ (for $r \geq 2$), $C_{r}$ (for $r \geq 1$) and $D_{r}$ (for $r \geq 3$). For the exceptional Lie algebras $E_{6}$ and $E_{7}$ we use respectively $(1, 0, 0, 0, 0, 0)$ and $(0, 0, 0, 0, 0, 1, 0)$.
We should mention that for $E_{6}$ there is another "fundamental" representation of dimension $27$, but this distinction is not important for our purposes.

\begin{lemma}
\label{lem:inn-fund}
Let $\Lambda$ be a dominant weight. Then $(\Lambda, \Lambda_{F})$ is given by
\begin{equation*}
\begin{array}{ll}
\sum_{k = 1}^{r} \frac{r + 1 - k}{r + 1} n_{k}, &
\mathfrak{g} = A_{r}, \\

\sum_{k = 1}^{r - 1} n_{k} + \frac{1}{2} n_{r}, &
\mathfrak{g} = B_{r}, \\

\sum_{k=1}^{r} n_{k}, &
\mathfrak{g} = C_{r}, \\

\sum_{k = 1}^{r - 2} n_{k} + \frac{1}{2}(n_{r - 1} + n_{r}), &
\mathfrak{g} = D_{r}, \\

\frac{4}{3} n_{1} + \frac{5}{3} n_{2} + 2 n_{3} + \frac{4}{3} n_{4} + \frac{2}{3} n_{5} + n_{6}, & \mathfrak{g} = E_{6}, \\
 
n_{1} + 2 n_{2} + 3 n_{3} + \frac{5}{2} n_{4} + 2 n_{5} + \frac{3}{2} n_{6} + \frac{3}{2} n_{7}, & \mathfrak{g} = E_{7}.
\end{array}
\end{equation*}

\end{lemma}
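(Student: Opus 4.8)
The plan is to reduce the statement to a finite set of inner products of fundamental weights and then evaluate these in the orthonormal coordinates fixed in \autoref{section:comp-groups}. Writing $\Lambda = \sum_{k=1}^r n_k \omega_k$ and using bilinearity of $(\cdot,\cdot)$, one has $(\Lambda, \Lambda_F) = \sum_{k=1}^r n_k (\omega_k, \Lambda_F)$, so the coefficient of $n_k$ in the asserted formula is precisely $(\omega_k, \Lambda_F)$. Since $\Lambda_F$ is itself one of the fundamental weights (namely $\omega_1$ for $A_r, B_r, C_r, D_r$, and the weights specified above for $E_6, E_7$), the whole content of the lemma is the computation of a single column of the symmetric matrix $M_{ij} = (\omega_i, \omega_j)$, which coincides, up to the normalization of the form, with the inverse of the (symmetrized) Cartan matrix.

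Next I would write each fundamental weight $\omega_k$ explicitly in the orthonormal basis $\{e_i\}$ of $\mathcal{E}$. This is obtained by solving the defining relations $(\omega_k, \alpha_j^\vee) = \delta_{kj}$, with the simple roots $\alpha_j$ read off from the table and $\alpha_j^\vee = 2\alpha_j / (\alpha_j, \alpha_j)$; equivalently, by inverting the linear system expressing the simple roots in terms of the fundamental weights. For the classical series this yields the familiar partial sums: for $B_r, C_r, D_r$ one gets $\omega_k = e_1 + \cdots + e_k$ for $k$ away from the tail, with the spinorial half-integer modifications at the end nodes, while for $A_r$ one has $\omega_k = \sum_{i=1}^k e_i - \frac{k}{r+1}\sum_{j=1}^{r+1} e_j$. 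Having these expressions and that of $\Lambda_F$ in the same basis, each inner product $(\omega_k, \Lambda_F)$ collapses to a one-line computation by orthonormality of $\{e_i\}$; for instance in type $A_r$ one finds $(\omega_k, \omega_1) = (r+1-k)/(r+1)$, matching the claim, and similarly for the remaining classical types.

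As a consistency check I would note that the same matrix $M_{ij}$ underlies \autoref{lemma:scal-g}: since $2\rho = 2\sum_j \omega_j$, the coefficients there equal $2\sum_j M_{kj}$, so the two lemmas must be compatible entrywise, which provides a useful cross-verification of the normalization and of the arithmetic.

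The main obstacle is the pair of exceptional cases $E_6$ and $E_7$. There the weight lattice lives in a proper subspace of the eight-dimensional ambient space $\mathcal{E}$, the relevant fundamental weights carry the intricate half-integer expressions already visible in the table of simple roots, and one must track the $e_8$-component and the correct projection carefully; this is exactly where arithmetic slips are most likely, and it is worth recalling that a missing factor of $1/2$ in the $E_6$ highest root of \cite{fuc} has already been flagged in the text. For these two algebras I would compute the required fundamental weights from the explicit inverse Cartan matrix and cross-check the resulting rational coefficients with the package LieART \cite{lieart}, as suggested above, rather than attempting the inversion by hand.
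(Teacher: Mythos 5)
Your proposal is correct and matches the paper's approach: the paper's proof is literally ``By direct computation'' (with LieART suggested as a check), and your reduction to the inner products $(\omega_k,\Lambda_F)$ evaluated in the orthonormal coordinates of \autoref{section:comp-groups} --- equivalently, reading off a column of the inverse symmetrized Cartan matrix --- is exactly that computation, spelled out. Your cross-check against \autoref{lemma:scal-g} via $2\rho = 2\sum_j \omega_j$ is a sensible addition, and your computed coefficients (e.g.\ $(\omega_k,\omega_1) = \frac{r+1-k}{r+1}$ in type $A_r$ and the $E_6$, $E_7$ columns under the paper's node labeling) agree with the stated formulas.
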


\begin{proof}
By direct computation.
\end{proof}

Similarly, we repeat this computation for the adjoint representation. In this case the highest weight coincides with the highest root, which we denote by $\theta$.

\begin{lemma}
\label{lem:inn-adj}
Let $\Lambda$ be a dominant weight. Then $(\Lambda, \theta)$ is given by
\begin{equation*}
\begin{array}{ll}
\sum_{k=1}^{r} n_{k}, &
\mathfrak{g} = A_{r}, \\

n_{1} + \sum_{k=2}^{r-1} 2 n_{k} + n_{r}, &
\mathfrak{g} = B_{r}, \\

\sum_{k=1}^{r} 2 n_{k}, &
\mathfrak{g} = C_{r}, \\

n_{1} + \sum_{k = 2}^{r - 2} 2 n_{k} + n_{r - 1} + n_{r}, &
\mathfrak{g} = D_{r}, \\

n_{1} + 2 n_{2} + 3 n_{3} + 2 n_{4} + n_{5} + 2 n_{6}, & \mathfrak{g} = E_{6}, \\
 
2 n_{1} + 3 n_{2} + 4 n_{3} + 3 n_{4} + 2 n_{5} + n_{6} + 2 n_{7}, & \mathfrak{g} = E_{7}.
\end{array}
\end{equation*}

\end{lemma}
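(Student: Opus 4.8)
The plan is to reduce the computation to a single pairing between each fundamental weight and the highest root, which can be read off from the root data already tabulated. Writing $\Lambda = \sum_{k=1}^{r} n_{k} \omega_{k}$ and using bilinearity of $(\cdot,\cdot)$, we have $(\Lambda, \theta) = \sum_{k=1}^{r} n_{k} (\omega_{k}, \theta)$, so it suffices to determine the $r$ numbers $(\omega_{k}, \theta)$ for each $\mathfrak{g}$ and then read off the coefficients.

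The key step is to express $\theta$ in terms of the simple roots. Writing $\theta = \sum_{j=1}^{r} c_{j} \alpha_{j}$, where the $c_{j}$ are the marks of the highest root, and recalling the defining property of the fundamental weights $\frac{2(\omega_{k}, \alpha_{j})}{(\alpha_{j}, \alpha_{j})} = \delta_{kj}$, equivalently $(\omega_{k}, \alpha_{j}) = \delta_{kj} d_{j}$ with $d_{j} = (\alpha_{j}, \alpha_{j})/2$ as defined earlier, we obtain the clean formula
\begin{equation*}
(\omega_{k}, \theta) = \sum_{j=1}^{r} c_{j} (\omega_{k}, \alpha_{j}) = c_{k} d_{k}, \qquad \text{so that} \qquad (\Lambda, \theta) = \sum_{k=1}^{r} c_{k} d_{k} \, n_{k}.
\end{equation*}
Thus the whole lemma reduces to producing, for each algebra, the marks $c_{k}$ and the half-squared-lengths $d_{k}$ of the simple roots.

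I would then carry this out case by case using the tables. The marks are obtained by solving the linear system $\theta = \sum_{j} c_{j} \alpha_{j}$ from the coordinate expressions for $\theta$ and the $\alpha_{j}$ in the orthonormal basis $\{ e_{i} \}$; for instance for $B_{r}$ one finds $(c_{1}, \ldots, c_{r}) = (1, 2, \ldots, 2, 2)$ and for $C_{r}$ one finds $(2, \ldots, 2, 1)$. The $d_{k}$ follow directly from the tabulated roots: in $B_{r}$ the roots $e_{i} - e_{i+1}$ have $d_{k} = 1$ while the short root $e_{r}$ has $d_{r} = 1/2$; in $C_{r}$ the roots $e_{i} - e_{i+1}$ have $d_{k} = 1$ while the long root $2 e_{r}$ has $d_{r} = 2$; and in the simply-laced cases $A_{r}, D_{r}, E_{6}, E_{7}$ every simple root has $d_{k} = 1$, so that $(\omega_{k}, \theta) = c_{k}$ is simply the mark. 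Substituting reproduces each line of the statement; e.g. $B_{r}$ gives $n_{1} + \sum_{k=2}^{r-1} 2 n_{k} + n_{r}$, where the two boundary $1$'s arise from $c_{1} d_{1} = 1$ and $c_{r} d_{r} = 2 \cdot \tfrac{1}{2} = 1$, while $C_{r}$ gives $\sum_{k=1}^{r} 2 n_{k}$ from $c_{k} d_{k} = 2$ throughout.

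The only genuine work is bookkeeping: extracting the marks for $E_{6}$ and $E_{7}$ from the tabulated roots, and keeping careful track of the two root lengths in the non-simply-laced cases $B_{r}$ and $C_{r}$, where the interplay between $c_{k}$ and $d_{k}$ is precisely what makes the short/long boundary nodes contribute $1$ rather than $2$. As an independent cross-check one can instead compute $(\omega_{k}, \theta)$ from explicit coordinate expressions for the $\omega_{k}$, or verify the output with the package \cite{lieart} as the author suggests.
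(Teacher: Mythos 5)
Your proof is correct and is essentially the paper's own argument: the paper disposes of this lemma with ``by direct computation,'' and your reduction $(\Lambda,\theta)=\sum_{k=1}^{r}c_{k}d_{k}\,n_{k}$ via the marks $c_k$ of the highest root and $(\omega_k,\alpha_j)=\delta_{kj}d_j$ is exactly that computation, organized uniformly across all six cases (I verified the marks and the resulting coefficients match every line, including $E_6$ and $E_7$ under the paper's node labeling). One point worth noting: your value $d_r=\tfrac{1}{2}$ for the short simple root of $B_r$ is the correct one for the orthonormal-basis tables that the paper's formulas actually presuppose (as a cross-check, it also reproduces the $\tfrac{1}{2}r^{2}n_{r}$ term in the paper's $(\Lambda,2\rho)$ formula), even though it sits awkwardly with the paper's stated normalization that short roots have length squared two --- so your bookkeeping resolved a genuine ambiguity in the source correctly.
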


\begin{proof}
By direct computation.
\end{proof}

At this point we could give the results for the spectral dimension in these two cases. They can be obtained simply by plugging the numbers given above into \autoref{thm:spec-dim}.
However we will not do this. The point is that in this case there is no clear relation between the spectral dimension, obtained by this procedure, and the classical dimension.
This is not unexpected, in view of similar results known in the literature.
Here we will use these computations as an intermediate step for the flag manifold case.
In this case, in view of the results mentioned in the introduction, we have better chances of getting something interesting.

In closing this section, we would like to point out the reference \cite{mat13}, where a similar problem is considered for the quantum group $\mathbb{C}_{q}[SU(2)]$. But there are important differences: first an operator different from $\modular$ appears in the definition of the zeta functions; secondly the analogue of the operator $\mathcal{C}$ is not central. Nevertheless, in this case one can still recover the Haar state, and moreover the spectral dimension coincides with the classical one. It is not clear whether this approach can be generalized to other quantum groups.
We want to point out, however, a certain compatibility with the present paper: indeed, upon restriction to the Podle\'{s} sphere, we get exactly the setting that we are considering here.

\section{Irreducible flag manifolds}
\label{section:flag-man}

\subsection{Definition and classification}

We start by recalling the relevant definitions.
Let $\mathfrak{g}$ be a complex semisimple Lie algebra. Denote by $G_{\mathbb{C}}$ the corresponding simply connected Lie group.
Let $\mathfrak{p}$ be a parabolic subalgebra, that is any subalgebra that contains a Borel subalgebra. Denote by $P$ the corresponding subgroup of $G_{\mathbb{C}}$. Then a \emph{generalized flag manifold} is defined to be the complex manifold $G_{\mathbb{C}} / P$.
As a real manifold it is diffeomorphic to $G / K$, where $G$ is the compact real form of $G_{\mathbb{C}}$ and $K = L \cap G$ is the real form of the Levi factor $L$.
The case when the adjoint action of $\mathfrak{p}$ on $\mathfrak{g} / \mathfrak{p}$ is irreducible (in which case we say that $\mathfrak{p}$ is of cominuscole type) corresponds to $G_{\mathbb{C}} / P$ being a \emph{symmetric space}. If in addition $\mathfrak{g}$ is simple, then the generalized flag manifold is called \emph{irreducible}.

The class of irreducible generalized flag manifolds coincides with the class of compact irreducible Hermitian symmetric spaces.
The latter are Riemannian symmetric spaces admitting a complex structure, which furthermore can not be written as a product of spaces of the same type.
For a comprehensive reference on these topics we refer to the book \cite{hel}.
The complete list of these irreducible components can be obtained from the classification of Riemannian symmetric spaces given by Cartan. It is provided in the table below.

\begin{table}[!ht]

\begin{center}
\begin{tabular}{|c|c|c|c|c|}
\hline 
name & $G$ & $K$ & for & real dimension \tabularnewline
\hline 
\hline

AIII & $SU(p+q)$ & $S(U(p) \times U(q))$ & $p > q \geq 1$ & $2 pq$ \tabularnewline
\hline

BDI ($q = 2$) & $SO(p + 2)$ & $SO(p) \times SO(2)$ & $p \geq 3$ & $2p$ \tabularnewline
\hline

CI & $Sp(r$) & $U(r)$ & $r \geq 2$ & $r(r + 1)$ \tabularnewline
\hline

DIII & $SO(2r)$ & $U(r)$ & $r \geq 3$ & $r(r - 1)$ \tabularnewline
\hline

EIII & $E_{6}$ & $SO(10) \times SO(2)$ & & $32$ \tabularnewline
\hline

EVII & $E_{7}$ & $E_{6} \times SO(2)$ & & $54$ \tabularnewline
\hline
\end{tabular}
\end{center}

\caption{List of compact irreducible Hermitian symmetric spaces.}
\end{table}

The restrictions on the parameters are to avoid some of the low-dimensional isomorphisms, see \cite[Chapter X.6.4]{hel}, and will be enforced for the rest of the paper.
We also introduce some further notation.
For $AIII$ we set $p+q = r+1$, while for $BDI$ we set $r = p/2 + 1$ is $p$ is even and $r = (p + 1)/2$ is $p$ is odd. With this notation the number $r$ coincides with the rank of the corresponding Lie algebras in all cases.

\subsection{Spherical weights}

The Peter-Weyl theorem for $\mathbb{C}[G]$ allows to derive a similar decomposition for $\mathbb{C}[G/K]$. A dominant weight $\lambda \in P^{+}$ is called \emph{spherical} if the subspace of $K$-fixed vectors in $V(\lambda)$ is one-dimensional.
Then the corresponding representation $V(\lambda)$ is also called spherical. We denote by $P_{K}^{+} \subset P^{+}$ the subset of dominant spherical weights. Then we have
\begin{equation*}
\mathbb{C}[G/K] \simeq \bigoplus_{\lambda \in P_{K}^{+}} V(\lambda).
\end{equation*}
Indeed if $G/K$ is an irreducible compact Hermitian symmetric space then $(G,K)$ is a Gelfand pair.
It is possible to introduce a subset $\{ \sph_{1}, \cdots, \sph_{n} \}$ of $P_{K}^{+}$ such that each $\lambda \in P_{K}^{+}$ can be written as a linear combination of these weights with non-negative coefficients. We call $\{ \sph_{i} \}$ the \emph{fundamental spherical weights}.
Clearly they can be expressed in terms the fundamental weights. Explicit formulae are given in the next lemma.

\begin{lemma}
The fundamental spherical weights $\{ \sph_{i} \}$ are given by
\begin{equation*}
\begin{array}{ll}
\sph_{1} = \omega_{1} + \omega_{r},\ 
\sph_{2} = \omega_{2} + \omega_{r-1},\ 
\cdots,\ 
\sph_{q} = \omega_{q} + \omega_{p},  
& AIII, \\

\sph_{1} = 2 \omega_{1},\ 
\sph_{2} = \omega_{2},
& BDI (q=2), \\

\sph_{1} = 2 \omega_{1},\ 
\sph_{2} = 2 \omega_{2},\ 
\cdots,\
\sph_{r} = 2 \omega_{r},
& CI, \\

\sph_{1} = \omega_{2},\ 
\sph_{2} = \omega_{4},\ 
\cdots,\ 
\sph_{\ell - 1} = \omega_{r - 2},\ 
\sph_{\ell} = 2 \omega_{r},
& DIII,\ r = 2\ell, \\

\sph_{1} = \omega_{2},\ 
\sph_{2} = \omega_{4},\ 
\cdots,\ 
\sph_{\ell - 1} = \omega_{r - 3},\ 
\sph_{\ell} = \omega_{r - 1} + \omega_{r},
& DIII,\ r = 2\ell + 1, \\

\sph_{1} = \omega_{1} + \omega_{5},\ 
\sph_{2} = \omega_{6},
& EIII, \\
 
\sph_{1} = \omega_{1},\ 
\sph_{2} = \omega_{5},\ 
\sph_{3} = 2 \omega_{6},
& EVII.
\end{array}
\end{equation*}
\end{lemma}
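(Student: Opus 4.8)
The plan is to read the list off the Cartan--Helgason theorem, which characterizes exactly which irreducible $V(\lambda)$ are spherical for a compact symmetric pair $(G,K)$; see \cite{hel}. First I would fix the adapted Cartan subalgebra $\mathfrak{t} = \mathfrak{t}_{0} \oplus \mathfrak{a}$, where $\mathfrak{a} \subset \mathfrak{p}$ is maximal abelian and $\mathfrak{t}_{0} = \mathfrak{t} \cap \mathfrak{k}$, and recall the restricted root system $\Sigma \subset \mathfrak{a}^{*}$ obtained by restricting the roots of $\mathfrak{g}$ to $\mathfrak{a}$. Writing $\theta$ for the involution, the decomposition $\mathfrak{t} = \mathfrak{t}_{0} \oplus \mathfrak{a}$ is the $\pm 1$ eigenspace splitting, so the condition that $\lambda$ vanish on $\mathfrak{t}_{0}$ is just $\theta^{*}\lambda = -\lambda$. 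The Cartan--Helgason theorem then says that $\lambda \in P^{+}$ is spherical if and only if $\theta^{*}\lambda = -\lambda$ and $2 (\lambda, \alpha)/(\alpha, \alpha) \in \mathbb{Z}_{\geq 0}$ for all $\alpha \in \Sigma^{+}$. Hence $P_{K}^{+}$ is precisely the monoid of dominant weights of $\Sigma$, freely generated by the $n = \mathrm{rank}(G/K)$ fundamental weights of $\Sigma$; these generators are by definition the fundamental spherical weights $\mu_{1}, \dots, \mu_{n}$.

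With this reduction in hand the remaining work is case-by-case: for each entry of the classification table I would identify the restricted root system $\Sigma$ and then express its fundamental weights in terms of the fundamental weights $\omega_{k}$ of $\mathfrak{g}$, using the explicit root data recorded in the earlier tables. The key structural input is that each of these spaces is cominuscule, so the symmetric pair is determined by a single distinguished node $i_{0}$ of the Dynkin diagram, and $\theta^{*}$ is $-w_{0}$ composed with the diagram symmetry that preserves the sub-diagram of $\mathfrak{k}$. This makes the reversal condition $\theta^{*}\lambda = -\lambda$ completely explicit and reduces the determination of the generators to solving small linear systems. Concretely, one finds $\Sigma = BC_{q}$ for $AIII$ with $p > q$, $\Sigma = B_{2}$ for $BDI$, $\Sigma = C_{r}$ for $CI$, $\Sigma = C_{\ell}$ or $BC_{\ell}$ for $DIII$ according to the parity of $r$, $\Sigma = BC_{2}$ for $EIII$, and $\Sigma = C_{3}$ for $EVII$, matching the number of generators listed in each line.

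I expect the main obstacle to be the bookkeeping in the cases where $\Sigma$ is non-reduced (the systems $BC_{q}$ for $AIII$ with $p > q$, $BC_{2}$ for $EIII$, and $BC_{\ell}$ for $DIII$ with $r$ odd) or where a long restricted root forces a factor of two (the appearances of $2\omega_{k}$ in $CI$, $BDI$, and $DIII$ with $r$ even). In each such case one must correctly identify which combination of the $\omega_{k}$ is the \emph{indecomposable} generator of $P_{K}^{+}$, and this is exactly what distinguishes the even and odd subcases of $DIII$ and separates $AIII$ (with $p > q$) from the self-dual case $p = q$. The exceptional cases $EIII$ and $EVII$ additionally require substituting the explicit expressions for the roots of $E_{6}$ and $E_{7}$ from the table, but this is purely mechanical. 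Finally, as an independent check not relying on the identification of $\Sigma$, I would verify each claimed $\mu_{i}$ directly against the two Cartan--Helgason conditions and confirm that no $\mu_{i}$ can be written as a sum of smaller spherical weights, which establishes that the listed weights are precisely the generators.
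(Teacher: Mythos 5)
Your route via the Cartan--Helgason theorem is a legitimate alternative to the paper's proof, which does not compute anything itself but simply cites Kr\"amer's table \cite{kra79}, or alternatively the Satake-diagram algorithm of \cite{sug62}. However, as written your argument contains a consequential error at exactly the delicate point. The integrality condition in the Cartan--Helgason theorem is $(\lambda, \alpha)/(\alpha, \alpha) \in \mathbb{Z}_{\geq 0}$ for all $\alpha \in \Sigma^{+}$ (see \cite{hel}, or Helgason's \emph{Groups and Geometric Analysis}, Chapter V, Theorem 4.1), \emph{not} $2(\lambda, \alpha)/(\alpha, \alpha) \in \mathbb{Z}_{\geq 0}$ as you state. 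Your version is just ordinary dominant integrality with respect to $\Sigma$, and it erases precisely the factor-of-two phenomena the lemma records. Concretely, for $CI$ one has $\mathfrak{t}_{0} = 0$ and $\Sigma = C_{r}$ is the full root system, so under your criterion every dominant weight of $Sp(r)$ would be spherical, and your intermediate claim that ``$P_{K}^{+}$ is precisely the monoid of dominant weights of $\Sigma$, freely generated by the fundamental weights of $\Sigma$'' would give $\sph_{k} = \omega_{k}$. This is false: $V(\omega_{1})$ is the standard representation $\mathbb{C}^{2r}$ of $Sp(r)$, which restricts to $\mathbb{C}^{r} \oplus (\mathbb{C}^{r})^{*}$ under $U(r)$ and has no invariant vector. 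The correct condition applied to the long roots $2e_{i}$ gives $(\lambda, 2e_{i})/(2e_{i}, 2e_{i}) = \lambda_{i}/2 \in \mathbb{Z}$, forcing all coordinates $\lambda_{i}$ to be even and yielding the generators $2\omega_{k}$ of the lemma; the same mechanism produces $2\omega_{1}$ in $BDI$, $2\omega_{r}$ in $DIII$ with $r$ even, and $2\omega_{6}$ in $EVII$. Note that your closing paragraph correctly anticipates that ``a long restricted root forces a factor of two,'' but under your stated criterion no such forcing occurs, so the proposal is internally inconsistent, and your final ``independent check'' against the mis-stated conditions would certify the wrong list.

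Once the criterion is repaired, your plan does go through and constitutes a genuinely more self-contained argument than the paper's citation of \cite{kra79}: your identifications of the restricted root systems ($BC_{q}$ for $AIII$ with $p > q$, $B_{2}$ for $BDI$, $C_{r}$ for $CI$, $C_{\ell}$ or $BC_{\ell}$ for $DIII$ according to parity, $BC_{2}$ for $EIII$, $C_{3}$ for $EVII$) are all correct, the reduction of $\theta^{*}\lambda = -\lambda$ to explicit linear conditions via the cominuscule structure is sound, and in the non-reduced cases one must simply remember that $\alpha$ and $2\alpha$ each impose their own integrality condition, the one for $2\alpha$ being the stronger. One further half-sentence is worth adding in a careful write-up: the bijectivity part of the Cartan--Helgason theorem guarantees that every restricted weight satisfying the two conditions actually extends (by zero on $\mathfrak{t}_{0}$) to an integral dominant weight of $\mathfrak{g}$, which is what lets you read the generators of $P_{K}^{+}$ off from the restricted picture and then solve for them in terms of the $\omega_{k}$.
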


\begin{proof}
A list of fundamental spherical weights is given in \cite[Tabelle 1]{kra79}, although our conventions are slightly different.
Alternatively one can obtain the result using Satake diagrams, which classify reals forms of complex simple Lie algebras.
For details and their complete list see for example \cite[Chapter X.F]{hel}.
Then an algorithm for obtaining the fundamental spherical weights from the Satake diagrams is given in \cite[Theorem 3]{sug62}.
\end{proof}

\begin{notation}
We will write a general dominant spherical weight as $\Lambda_{S} = \sum_{k = 1}^{n} m_{k} \sph_{k}$, where $\{ \sph_{k} \}$ are the fundamental spherical weights and $\{ m_{k} \}$ are non-negative integers.
\end{notation}

In the following we will need to compute some inner products involving spherical weights.
These can be obtained from the inner products for the corresponding simple Lie algebra $\mathfrak{g}$ as follows.
Let us write $\sph_{k} = \sum_{j = 1}^{r} c_{kj} \omega_{j}$ in terms of the fundamental weights. Then we have
\begin{equation*}
(\Lambda_{S}, \gamma) = \sum_{k = 1}^{n} m_{k} (\sph_{k}, \gamma)
= \sum_{k = 1}^{n} \sum_{j = 1}^{r} c_{kj} m_{k} (\omega_{j}, \gamma),
\end{equation*}
where $\gamma$ is any weight.
Therefore, if we write $(\Lambda, \gamma) = \sum_{j = 1}^{r} n_{j} (\omega_{j}, \gamma)$, then the inner product $(\Lambda_{S}, \gamma)$ is obtained from $(\Lambda, \gamma)$ by replacing $n_{j}$ with $\sum_{k = 1}^{n} c_{kj} m_{k}$.

\subsection{Computation quantum dimension}

The structure of irreducible generalized flag manifolds presented above carries over to the quantum setting, see \cite[Proposition 4.1]{stdi99}. Therefore we have the decomposition in terms of spherical weights
\begin{equation*}
\mathbb{C}_{q}[G/K] \simeq \bigoplus_{\lambda \in P_{K}^{+}} V(\lambda).
\end{equation*}
This is all we need in order to compute the quantum dimension.

\begin{lemma}
\label{lem:scal-sph}
Let $\Lambda_{S}$ be a dominant spherical weight. Then $(\Lambda_{S}, 2\rho)$ is given by
\begin{equation*}
\begin{array}{ll}
\sum_{k=1}^{q} 2k (r + 1 - k) m_{k},
& AIII, \\

2p m_{1} + 2 (p - 1) m_{2},
& BDI(q = 2), \\

\sum_{k = 1}^{r} 2k (2r + 1 - k) m_{k},
& CI, \\

\sum_{k = 1}^{\ell} 2k (4\ell - 1 - 2k) m_{k},
& DIII,\ r = 2\ell, \\

\sum_{k = 1}^{l} 2k (4\ell + 1 - 2k) m_{k},
& DIII,\ r = 2\ell + 1, \\

32 m_{1} + 22 m_{2},
& EIII, \\
 
34 m_{1} + 52 m_{2} + 54 m_{3},
& EVII.
\end{array}
\end{equation*}
\end{lemma}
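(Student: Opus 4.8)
The plan is to reduce the entire lemma to a single mechanical substitution, feeding the general formula for $(\Lambda, 2\rho)$ from \autoref{lemma:scal-g} into the expressions for the fundamental spherical weights $\sph_k$ obtained in the lemma expressing the $\sph_k$ in terms of the fundamental weights $\omega_j$. Concretely, write $(\Lambda, 2\rho) = \sum_{j=1}^{r} a_j n_j$ with the coefficients $a_j$ read off from \autoref{lemma:scal-g}, and expand each fundamental spherical weight as $\sph_k = \sum_{j=1}^{r} c_{kj} \omega_j$. The substitution rule recorded at the close of the subsection on spherical weights, namely that $(\Lambda_S, \gamma)$ is obtained from $(\Lambda, \gamma)$ by replacing $n_j$ with $\sum_{k=1}^{n} c_{kj} m_k$, then gives
\begin{equation*}
(\Lambda_S, 2\rho) = \sum_{j=1}^{r} a_j \sum_{k=1}^{n} c_{kj} m_k = \sum_{k=1}^{n} \Bigl( \sum_{j=1}^{r} c_{kj} a_j \Bigr) m_k .
\end{equation*}
So the proof amounts to computing, family by family, the coefficient $\sum_{j} c_{kj} a_j$ of each $m_k$; this is a finite and entirely classical bookkeeping.

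To see the mechanism in action, take $AIII$, where $\mathfrak{g} = A_r$ with $r + 1 = p + q$. Here \autoref{lemma:scal-g} gives $a_j = j(r+1-j)$, while the fundamental spherical weights are $\sph_k = \omega_k + \omega_{r+1-k}$ for $1 \le k \le q$, so that the only nonzero coefficients are $c_{kk} = c_{k,\,r+1-k} = 1$. The coefficient of $m_k$ is therefore $a_k + a_{r+1-k} = k(r+1-k) + (r+1-k)k = 2k(r+1-k)$, exactly as stated. The family $CI$, where $\sph_k = 2\omega_k$, is handled identically by reading off $2a_k$; and the exceptional cases $EIII$, $EVII$ only require substituting the explicit integers $a_j$ from \autoref{lemma:scal-g} into the short lists of spherical weights.

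The step I expect to demand the most care is the case analysis concealed in the families $BDI(q=2)$ and $DIII$. For $BDI$ the underlying simple Lie algebra is $B_r$ when $p$ is odd (with $p = 2r-1$) and $D_r$ when $p$ is even (with $p = 2r-2$), so one must run the computation along both the $B_r$ and the $D_r$ rows of \autoref{lemma:scal-g} and check that the two branches collapse onto the single formula $2p\,m_1 + 2(p-1) m_2$. Similarly, for $DIII$ one has $\mathfrak{g} = D_r$, but the list of fundamental spherical weights depends on the parity of $r = 2\ell$ versus $r = 2\ell + 1$, and the last weight involves the spinor node(s), namely $2\omega_r$ in the even case and $\omega_{r-1} + \omega_r$ in the odd case, whose coefficient $a_{r-1} = a_r = \frac{1}{2} r(r-1)$ must be combined by doubling (even) or by summing the two spinor nodes (odd). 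Keeping the relabeling $\omega_{2k} \mapsto \sph_k$ and these parity conventions straight is the only real source of friction; once that is done, verifying that the even and odd branches yield $2k(4\ell - 1 - 2k)$ and $2k(4\ell + 1 - 2k)$ respectively is routine.
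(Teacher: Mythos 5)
Your proposal is correct and takes essentially the same route as the paper: the paper's proof likewise consists of applying the substitution rule $n_j \mapsto \sum_{k} c_{kj} m_k$ to \autoref{lemma:scal-g}, observing that the factor $2$ in the $AIII$ case comes from the invariance of $k(r+1-k)$ under $k \to r+1-k$, and splitting $BDI(q=2)$ according to the parity of $p$ into the $B_r$ (with $r=(p+1)/2$) and $D_r$ (with $r=p/2+1$) series, exactly as you do. The only difference is that you additionally spell out the $DIII$ spinor-node bookkeeping ($2\omega_r$ versus $\omega_{r-1}+\omega_r$), which the paper leaves implicit; your coefficients check out in all families.
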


\begin{proof}
The result follows by applying the substitution rule described above to \autoref{lemma:scal-g}.
Note that in the case $AIII$ the factor $2$ arises from the invariance of the coefficient $k (r + 1 - k)$ under $k \to r + 1 - k$.
We also remark that the case $BDI(q = 2)$ needs to be treated with some care.
This is because the corresponding Lie algebra is $\mathfrak{so}(p + 2)$ so that, depending on the parity of $p$, we have to deal with two different series.
In the case $p = 2\ell$ we get $D_{r} = \mathfrak{so}(2\ell + 2) $ with $r = p/2 + 1$, while in the case $p = 2\ell + 1$ we get $B_{r} = \mathfrak{so}(2l + 3)$ with $r = (p + 1)/2$.
The result turns out to take the same form for in both cases.
\end{proof}

From the above lemma we easily obtain the following interesting result.

\begin{proposition}
\label{proposition:qdim-asym}
Let $\Lambda_{S}$ be a dominant spherical weight. Write $(\Lambda_{S}, 2\rho) = \sum_{k} a_{k}^{S} m_{k}$ for some non-negative integers $\{ a^{S}_{k} \}$ as in the previous lemma.
Then we have $\frac{2}{(\theta, \theta)} \max\{ a_{k}^{S} \} = d$, where $d$ is the dimension of the corresponding classical space.
\end{proposition}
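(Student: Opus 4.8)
The plan is to prove the identity by a direct case-by-case verification over the six families listed in the classification table, using the explicit coefficients $a_k^S$ recorded in \autoref{lem:scal-sph} together with the values of $(\theta,\theta)$ read off from the highest-root table. Equivalently, for each family I must check that $\max_k\{a_k^S\}=\frac{(\theta,\theta)}{2}\,d$. Before splitting into cases I would record one structural remark that makes the statement well-posed independently of conventions: under a rescaling $(\cdot,\cdot)\mapsto c(\cdot,\cdot)$ of the inner product, every $a_k^S$ and $(\theta,\theta)$ scale by the same factor $c$, so the quantity $\frac{2}{(\theta,\theta)}\max_k\{a_k^S\}$ is invariant. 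Thus I may compute throughout with the normalization $(e_i,e_j)=\delta_{ij}$ used in the root tables, and the conclusion is unaffected by the precise convention on root lengths.

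The genuine content is locating the index at which $a_k^S$ is maximal. In the three ``series'' cases $AIII$, $CI$, $DIII$ the coefficient is, as a function of $k$, the restriction to $1\le k\le n$ of a concave quadratic $k\mapsto 2k(c-k)$, so the key step is to check in each case that the vertex $k=c/2$ lies at or to the right of $n$; then the coefficient is increasing on the admissible range and the maximum sits at the last fundamental spherical weight $k=n$. Concretely, for $AIII$ one has $a_k^S=2k(p+q-k)$ with $1\le k\le q$ and $q<(p+q)/2$ (using $p>q$), giving $\max=a_q^S=2pq$; for $CI$ one has $2k(2r+1-k)$ with $r<(2r+1)/2$, giving $\max=a_r^S=2r(r+1)$; for $DIII$ the vertex sits just above $k=\ell$, so a short comparison of the two largest values gives $\max=a_\ell^S$, equal to $2\ell(2\ell-1)$ when $r=2\ell$ and $2\ell(2\ell+1)$ when $r=2\ell+1$. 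The remaining cases $BDI$, $EIII$, $EVII$ are two- or three-term and reduce to an immediate comparison of explicit integers, yielding $2p$ (here the maximum is at $k=1$), $32$, and $54$ respectively.

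Finally I would read off $(\theta,\theta)$ and compare with the classical dimension $d$ from the classification table. For every family except $CI$ the highest root has $(\theta,\theta)=2$, so the prefactor $\frac{2}{(\theta,\theta)}$ equals $1$, and the maxima $2pq,\ 2p,\ r(r-1),\ 32,\ 54$ match the tabulated dimensions directly (note $r(r-1)=2\ell(2\ell\mp1)$ in the two $DIII$ subcases). For $CI$ the highest root $\theta=2e_1$ has $(\theta,\theta)=4$, so the prefactor is $\frac12$ and $\frac12\cdot 2r(r+1)=r(r+1)=d$. The only point requiring care, and the nearest thing to an obstacle, is the $BDI$ family: since $\mathfrak{so}(p+2)$ is of type $D_{p/2+1}$ or $B_{(p+1)/2}$ according to the parity of $p$, I would confirm that both the coefficient formula of \autoref{lem:scal-sph} and the value $(\theta,\theta)=2$ (with $\theta=e_1+e_2$ in either series, a long root in the $B$ case) yield the same conclusion $\frac{2}{(\theta,\theta)}\max_k\{a_k^S\}=2p=d$ regardless of parity. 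Beyond this parity bookkeeping the argument is entirely routine substitution.
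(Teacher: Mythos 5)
Your proposal is correct and takes essentially the same route as the paper's proof, which likewise reads off $\max\{a_k^S\}$ case by case from \autoref{lem:scal-sph}, uses $(\theta,\theta)=2$ in all cases except $C_r$ where $(\theta,\theta)=4$, and compares with the classification table; you merely make explicit the vertex/concavity argument and the $BDI$ parity check that the paper leaves as ``easily obtained''. One microscopic slip: for $DIII$ with $r=2\ell$ the vertex of $k\mapsto 2k(4\ell-1-2k)$ sits at $\ell-\tfrac14$, just \emph{below} $k=\ell$ rather than above, but the comparison of the two largest values that you invoke anyway confirms $\max=a_\ell^S=2\ell(2\ell-1)$, so the conclusion is unaffected.
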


\begin{proof}
From \autoref{lem:scal-sph} we easily obtain the following values for $\max\{ a_{k}^{S} \}$: $2pq$ for $AIII$, $2n$ for $BDI$, $2r(r+1)$ for $CI$, $r (r - 1)$ for $DIII$, $32$ for $EIII$ and $54$ for $EVII$.
For the highest root we have $(\theta, \theta) = 2$ for all simple algebras except for $C_{r}$, for which $(\theta, \theta) = 4$.
The conclusion follows by comparison with the list of irreducible Hermitian symmetric spaces.
\end{proof}

\begin{remark}
It is important to notice that the equality $\frac{2}{(\theta, \theta)} \max\{ a_{k}^{S} \} = d$ does not depend on the choice of the inner product $(\cdot, \cdot)$, since it is given by a ratio.
\end{remark}

\section{Computations for quantized flag manifolds}
\label{section:comp-flag}

In this section we will conclude the computation of the spectral dimension for quantized irreducible generalized flag manifolds. Again, we will consider the case of the fundamental and adjoint representations.
We begin with some notation.

\begin{notation}
Let $G/K$ be an irreducible generalized flag manifold. Then for any dominant spherical weight $\Lambda_{S} = \sum_{k = 1}^{n} m_{k} \sph_{k}$ and any fixed representation $\Lambda_{0}$ we will write
\begin{equation*}
(\Lambda_{S}, 2 \rho) = \sum_{k = 1}^{n} a_{k}^{S} m_{k}, \quad
(\Lambda_{S}, \Lambda_{0}) = \sum_{k = 1}^{n} b_{k}^{S} m_{k}.
\end{equation*}
\end{notation}

We can easily adapt the formula for the spectral dimension, given in \autoref{thm:spec-dim}, to the case of quantized flag manifolds. In this case, the trace appearing in the definition $\zeta_{\mathcal{C}_{t}}(z) = \mathrm{Tr}(\mathcal{C}_{t}^{- z/2} \modular)$ must be taken over the Hilbert space completion of $\mathbb{C}_{q}[G/K]$.

\begin{corollary}
\label{cor:spec-flag}
Let $G/K$ be an irreducible generalized flag manifold.
Then the spectral dimension corresponding to the zeta function $\zeta_{\mathcal{C}_{t}}$ is given by $p = \frac{1}{2 t} \max \{ a_{k}^{S} / b_{k}^{S} \}$.
\end{corollary}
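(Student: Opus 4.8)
The plan is to mimic the proof of \autoref{thm:spec-dim}, the only essential modification coming from the fact that the Peter-Weyl decomposition for the homogeneous space is multiplicity-free. First I would use the decomposition $\mathbb{C}_{q}[G/K] \simeq \bigoplus_{\lambda \in P_{K}^{+}} V(\lambda)$, in which each spherical representation occurs exactly once. Since $\mathcal{C}_{t}$ is central it acts by the scalar $\chi_{\Lambda_{S}}(\mathcal{C}_{t})$ on $V(\Lambda_{S})$, while the trace of $\modular$ over a single copy of $V(\Lambda_{S})$ is the quantum dimension $\dim_{q} V(\Lambda_{S})$. Taking the trace over the Hilbert space completion of $\mathbb{C}_{q}[G/K]$, I would therefore write
$$
\zeta_{\mathcal{C}_{t}}(z) = \mathrm{Tr}(\mathcal{C}_{t}^{-z/2} \modular) = \sum_{\Lambda_{S} \in P_{K}^{+}} \dim_{q} V(\Lambda_{S})\, \chi_{\Lambda_{S}}(\mathcal{C}_{t})^{-z/2}.
$$
The crucial point to stress is that the quantum dimension now appears to the first power, in contrast with the $(\dim_{q} V(\Lambda))^{2}$ that occurs in the group case; this is exactly the feature responsible for the factor $\frac{1}{2t}$ replacing the $\frac{1}{t}$ of \autoref{thm:spec-dim}.

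Next I would pass to real $z = s$ and insert the two asymptotic estimates already available. \autoref{lem:qdim} gives $\dim_{q} V(\Lambda_{S}) \sim q^{-(\Lambda_{S}, 2\rho)}$, and \autoref{lemma:cas-ineq}, applied with highest weight $\Lambda_{S}$, gives $\chi_{\Lambda_{S}}(\mathcal{C}_{t}) \sim q^{-4t(\Lambda_{S}, \Lambda_{0})}$. Writing a general spherical weight as $\Lambda_{S} = \sum_{k=1}^{n} m_{k} \sph_{k}$ converts the sum over $P_{K}^{+}$ into a sum over non-negative integers $\{ m_{k} \}$; using the notation $(\Lambda_{S}, 2\rho) = \sum_{k} a_{k}^{S} m_{k}$ and $(\Lambda_{S}, \Lambda_{0}) = \sum_{k} b_{k}^{S} m_{k}$ I would obtain
$$
\zeta_{\mathcal{C}_{t}}(s) \sim \sum_{\{ m \}} q^{-\sum_{k=1}^{n} a_{k}^{S} m_{k}} \left( q^{-4t \sum_{k=1}^{n} b_{k}^{S} m_{k}} \right)^{-s/2} = \sum_{\{ m \}} q^{\sum_{k=1}^{n} (2 s t\, b_{k}^{S} - a_{k}^{S}) m_{k}}.
$$

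This is a product of geometric series in the variables $m_{k}$, convergent precisely when $2 s t\, b_{k}^{S} - a_{k}^{S} > 0$ for every $k$, that is when $s > \frac{1}{2t} \frac{a_{k}^{S}}{b_{k}^{S}}$ for all $k$. The infimum of such $s$ is $p = \frac{1}{2t} \max \{ a_{k}^{S} / b_{k}^{S} \}$, which is the claimed value. I do not anticipate a genuine obstacle, since the argument is essentially a transcription of \autoref{thm:spec-dim} to the spherical lattice. The one point requiring care is the bookkeeping of the single power of the quantum dimension, equivalently the justification that $\mathrm{Tr}_{V(\Lambda_{S})}(\modular) = \dim_{q} V(\Lambda_{S})$ on each multiplicity-free component of the completion of $\mathbb{C}_{q}[G/K]$; this follows from the corresponding statement for the group together with the multiplicity-free nature of the decomposition.
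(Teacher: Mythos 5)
Your proposal is correct and follows essentially the same route as the paper, which likewise derives the result by transcribing the proof of \autoref{thm:spec-dim} to the multiplicity-free spherical decomposition, with the single power of $\dim_{q} V(\Lambda_{S})$ (in place of the square) producing the factor $\frac{1}{2t}$. Your write-up simply makes explicit the geometric-series bookkeeping that the paper leaves as a remark.
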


\begin{proof}
Recall that $\mathbb{C}_{q}[G/K]$ has a multiplicity-free decomposition given by
\begin{equation*}
\mathbb{C}_{q}[G/K] = \bigoplus_{\Lambda_{S} \in P^{+}_{K}} V(\Lambda_{S}),
\end{equation*}
where the sum is over all dominant spherical weights. Then the only difference with the quantum group case, given in \autoref{thm:spec-dim}, is that we get $\dim_{q} V(\Lambda_{S})$ instead of $(\dim_{q}V(\Lambda))^{2}$. Therefore this factor of $2$ must be accounted for in the rest of the proof. Once this is done, by replacing the coefficients with their spherical counterparts we obtain the result.
\end{proof}

Next we investigate the fundamental and adjoint representations.

\subsection{Fundamental representation}

We start with the fundamental representation and use the notation $\Lambda_{F}$ for its highest weight. This, as it will turn out, is not the most interesting case, but it allows comparisons with other results in the literature.

\begin{lemma}
Let $\Lambda_{S}$ be a dominant spherical weight. 
Then $(\Lambda_{S}, \Lambda_{F})$ is given by
\begin{equation*}
\begin{array}{ll}
\sum_{k = 1}^{q} m_{k},
& AIII, \\

2 m_{1} + m_{2},
& BDI(q = 2), \\

\sum_{k = 1}^{r} 2 m_{k},
& CI, \\

\sum_{k = 1}^{\ell} m_{k},
& DIII, \ r = 2\ell \\

\sum_{k = 1}^{\ell} m_{k},
& DIII, \ r = 2\ell + 1 \\

2 m_{1} + m_{2},
& EIII, \\
 
m_{1} + 2 m_{2} + 3 m_{3},
& EVII.
\end{array}
\end{equation*}
In particular for the cases $AIII$, $CI$ and $DIII$ we have $(\Lambda_{S}, \Lambda_{F}) = \frac{1}{2} (\theta, \theta) \sum_{k = 1}^{n} m_{k}$.
\end{lemma}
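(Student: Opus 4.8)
The plan is to compute $(\Lambda_S, \Lambda_F)$ by the substitution-rule mechanism already set up in the section on spherical weights: since I have an explicit formula for $(\Lambda, \Lambda_F)$ in terms of the Dynkin labels $n_j$ (from \autoref{lem:inn-fund}), and since each fundamental spherical weight is written as $\sph_k = \sum_j c_{kj} \omega_j$ (from the lemma listing the $\sph_k$), I obtain $(\Lambda_S, \Lambda_F)$ by replacing each $n_j$ with $\sum_k c_{kj} m_k$. First I would, for each symmetric space in the classification table, read off the coefficients $c_{kj}$ from the explicit expressions for the fundamental spherical weights, then substitute these into the corresponding case of \autoref{lem:inn-fund}. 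This is purely mechanical bookkeeping, so in the body of the proof I expect to simply write ``by direct computation, applying the substitution rule to \autoref{lem:inn-fund}'' rather than display every case.

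The only genuinely substantive content is the concluding identity $(\Lambda_S, \Lambda_F) = \frac{1}{2}(\theta, \theta) \sum_{k=1}^n m_k$ for the cases $AIII$, $CI$ and $DIII$. Here I would recall the normalization fixed earlier: short roots have length squared $2$, so $(\theta, \theta) = 2$ for the simply-laced cases $A_r$ and $D_r$, while $(\theta, \theta) = 4$ for $C_r$. Thus the claimed coefficient $\frac{1}{2}(\theta, \theta)$ equals $1$ for $AIII$ and $DIII$, and equals $2$ for $CI$. I would then verify that the computed formulas match: for $AIII$ the result is $\sum_{k=1}^q m_k$ (coefficient $1$), for $DIII$ in both parities it is $\sum_{k=1}^\ell m_k$ (coefficient $1$), and for $CI$ it is $\sum_{k=1}^r 2 m_k$ (coefficient $2$). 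Since the number $n$ of fundamental spherical weights is $q$ for $AIII$, $\ell$ for $DIII$, and $r$ for $CI$, each of these is precisely $\frac{1}{2}(\theta,\theta)\sum_{k=1}^n m_k$, confirming the identity.

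\textbf{The main obstacle} is not conceptual but organizational: one must be careful with the index ranges, since $n$ (the number of fundamental spherical weights) differs from $r$ (the rank) in most cases, and the labeling of the $\sph_k$ in the $AIII$ and $DIII$ cases pairs up fundamental weights in a way that must be tracked accurately when applying the substitution. I would also double-check the parity split in $DIII$ ($r = 2\ell$ versus $r = 2\ell + 1$), since the $\sph_\ell$ weight is $2\omega_r$ in one case and $\omega_{r-1}+\omega_r$ in the other, yet both must yield the same coefficient $m_\ell$; verifying that the $\frac{1}{2}$ factors in the $D_r$ formula of \autoref{lem:inn-fund} combine correctly with these differing expressions is the one spot where a sign or factor error could slip in. Apart from this, the proof reduces to the remark ``by direct computation'' together with the explicit check of the three uniform cases against the values $(\theta,\theta) \in \{2, 4\}$.
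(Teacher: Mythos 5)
Your proposal is correct and follows the paper's proof exactly: the paper likewise obtains the table by substituting the fundamental spherical weights into \autoref{lem:inn-fund}, and derives the uniform identity $(\Lambda_{S}, \Lambda_{F}) = \frac{1}{2}(\theta,\theta)\sum_{k=1}^{n} m_{k}$ from the normalization $(\theta,\theta)=2$ for all cases except $C_{r}$, where $(\theta,\theta)=4$. Your explicit tracking of the index ranges and the $DIII$ parity split is more detailed than the paper's one-line argument, but the approach is the same.
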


\begin{proof}
It follows by plugging the spherical weights into the formulae given in \autoref{lem:inn-fund}.
The fact that $(\Lambda_{S}, \Lambda_{F}) = \frac{1}{2} (\theta, \theta) \sum_{k = 1}^{n} m_{k}$ follows by observing that, in our conventions, we have $(\theta, \theta) = 2$ for all simple Lie algebras except for $C_{r}$, for which $(\theta, \theta) = 4$.
\end{proof}

By plugging the results of \autoref{proposition:qdim-asym} and the above lemma into \autoref{cor:spec-flag} we obtain, for the cases $AIII$, $CI$ and $DIII$, the following expression for the spectral dimension
\begin{equation*}
p = \frac{1}{2 t} \max \{ a_{k}^{S} / b_{k}^{S} \} = \frac{1}{t (\theta, \theta)} \max \{ a_{k}^{S} \} = \frac{d}{2 t}.
\end{equation*}
As we will see in a moment, a similar pattern appears for the adjoint representation. Notice that for the other cases the formula is more complicated, since $\max \{ a_{k}^{S} / b_{k}^{S} \} \neq \frac{2}{(\theta, \theta)} \max \{ a_{k}^{S} \}$.

\begin{remark}
A computation of the spectral dimension for quantum projective spaces was given in \cite{mat14}. It can be checked that the Casimir operator used in this reference corresponds, in the language of the present paper, to the choice of the fundamental representation and $t = 1/2$.
Indeed in this case the spectral dimension coincides with the classical one.
\end{remark}

\subsection{Adjoint representation}

Next we turn to the adjoint representation. The computations are completely parallel to those for the fundamental one.

\begin{lemma}
\label{lemma:cas-asym}
Let $\Lambda_{S}$ be a dominant spherical weight. 
Then $(\Lambda_{S}, \theta)$ is given by
\begin{equation*}
\begin{array}{ll}
\sum_{k = 1}^{q} 2 m_{k},
& AIII, \\

2 m_{1} + 2 m_{2},
& BDI(q = 2), \\

\sum_{k = 1}^{r} 4 m_{k},
& CI, \\

\sum_{k = 1}^{\ell} 2 m_{k},
& DIII, \ r = 2\ell \\

\sum_{k = 1}^{\ell} 2 m_{k},
& DIII, \ r = 2\ell + 1 \\

2 m_{1} + 2 m_{2},
& EIII, \\
 
2 m_{1} + 2 m_{2} + 2 m_{3},
& EVII.
\end{array}
\end{equation*}
In particular we have $(\Lambda_{S}, \theta) = (\theta, \theta) \sum_{k = 1}^{n} m_{k}$.
\end{lemma}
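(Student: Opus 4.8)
The plan is to treat this as the adjoint analogue of the fundamental-representation computation carried out just above, and to obtain it by exactly the same mechanism as \autoref{lem:scal-sph}. The input is the formula for $(\Lambda, \theta)$ recorded in \autoref{lem:inn-adj} together with the substitution rule of \autoref{section:flag-man}: writing each fundamental spherical weight as $\sph_{k} = \sum_{j=1}^{r} c_{kj} \omega_{j}$, the inner product $(\Lambda_{S}, \theta)$ is read off from $(\Lambda, \theta)$ by replacing each $n_{j}$ with $\sum_{k} c_{kj} m_{k}$. First I would substitute the explicit expansions of the $\sph_{k}$ in terms of the $\omega_{j}$ into the six formulae of \autoref{lem:inn-adj}, and collect the coefficient of each $m_{k}$. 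For $AIII$, $CI$, $EIII$ and $EVII$ this is a direct substitution with no case distinction.

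The only bookkeeping needing care is, as in \autoref{lem:scal-sph}, the parity-dependent families. For $BDI(q=2)$ the underlying algebra is $\mathfrak{so}(p+2)$, so I would split into $D_{r}$ (for $p$ even) and $B_{r}$ (for $p$ odd); in both, the coefficients attached to $\omega_{1}$ and $\omega_{2}$ in $(\Lambda,\theta)$ are $1$ and $2$ respectively, and the spherical weights $\sph_{1} = 2\omega_{1}$, $\sph_{2} = \omega_{2}$ then produce $2m_{1} + 2m_{2}$ in both cases. For $DIII$ one separates $r = 2\ell$ from $r = 2\ell+1$: the interior weights $\sph_{k} = \omega_{2k}$ hit nodes of coefficient $2$, while the terminal weight ($2\omega_{r}$, respectively $\omega_{r-1} + \omega_{r}$) also contributes $2$, so both subcases collapse to $\sum_{k} 2 m_{k}$.

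The substantive conclusion, and the reason for singling out the adjoint representation, is the uniform identity $(\Lambda_{S}, \theta) = (\theta,\theta) \sum_{k} m_{k}$; equivalently every fundamental spherical weight satisfies $(\sph_{k}, \theta) = (\theta,\theta)$. This is exactly what the per-family coefficients exhibit: each $m_{k}$ carries coefficient $2$, except in $CI$ where it carries $4$, matching $(\theta,\theta) = 2$ for all the relevant simple Lie algebras and $(\theta,\theta) = 4$ for $C_{r}$. I do not anticipate any real obstacle, the computation being entirely classical; and a would-be conceptual shortcut through comarks, using $(\omega_{j}, \theta) = \tfrac{1}{2}(\theta,\theta)\, a_{j}^{\vee}$, offers no saving since it still demands the same per-family check that $\sum_{j} c_{kj} a_{j}^{\vee} = 2$. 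Combined with \autoref{proposition:qdim-asym} and \autoref{cor:spec-flag}, this identity is what will force the spectral dimension to be proportional to the classical dimension in every case.
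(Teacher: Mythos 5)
Your proposal is correct and follows essentially the same route as the paper: the paper's proof likewise consists of plugging the fundamental spherical weights into the formulae of \autoref{lem:inn-adj} via the substitution rule $n_{j} \mapsto \sum_{k} c_{kj} m_{k}$, and then deduces the uniform identity $(\Lambda_{S}, \theta) = (\theta, \theta) \sum_{k=1}^{n} m_{k}$ from the observation that $(\theta, \theta) = 2$ for all the simple Lie algebras involved except $C_{r}$, where $(\theta, \theta) = 4$. Your additional bookkeeping for the parity-dependent families $BDI$ and $DIII$ is a correct elaboration of the same computation (and mirrors the care the paper takes in \autoref{lem:scal-sph}), so there is nothing to add.
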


\begin{proof}
It follows by plugging the spherical weights into the formulae given in \autoref{lem:inn-adj}.
The fact that $(\Lambda_{S}, \theta) = (\theta, \theta) \sum_{k = 1}^{n} m_{k}$ follows by observing that, in our conventions, we have $(\theta, \theta) = 2$ for all simple Lie algebras except for $C_{r}$, for which $(\theta, \theta) = 4$.
\end{proof}

Notice that in this case, in contrast with the fundamental representation, the result takes the same form for all spaces. Then similarly, by plugging the results of \autoref{proposition:qdim-asym} and the above lemma into \autoref{cor:spec-flag}, we obtain the spectral dimension
\begin{equation*}
p = \frac{1}{2 t} \max \{ a_{k}^{S} / b_{k}^{S} \} = \frac{1}{2 t (\theta, \theta)} \max \{ a_{k}^{S} \} = \frac{d}{4 t}.
\end{equation*}
In particular if we set $t = 1/4$ the spectral dimension coincides with the classical one.
We summarize this and the previous results into the following theorem, which is our analogue of Weyl's law for quantized irreducible generalized flag manifolds.

\begin{theorem}
\label{theorem:weyl-law}
Let $G/K$ be an irreducible generalized flag manifold of dimension $d$.
Let $\mathcal{C}$ be the central element defined in \eqref{eq:def-cas} for the value $t = 1/4$, acting on the Hilbert space completion of $\mathbb{C}_{q}[G/K]$.
Define the zeta function $\zeta_{\mathcal{C}, a}(z) = \mathrm{Tr}(a \mathcal{C}^{-z/2} \modular)$, where $a \in \mathbb{C}_{q}[G/K]$.
It satisfies the following properties, in analogy with the residue formula \eqref{eq:res-form}:
\begin{itemize}
\item we have the equality $\zeta_{\mathcal{C}, a}(z) = \zeta_{\mathcal{C}}(z) h(a)$, where $h$ denotes the Haar state,
\item $\zeta_{\mathcal{C}}(z)$ is holomorphic for $\mathrm{Re}(z) > d$ and has a singularity at $z = d$.
\end{itemize}
\end{theorem}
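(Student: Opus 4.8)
The plan is to assemble the theorem from the results established earlier, the only genuinely new point being the identification of the singularity. The first bullet is a direct appeal to \autoref{prop:res-int}, so I would begin by checking that $\mathcal{C} = \mathcal{C}_{1/4}$ satisfies the three standing hypotheses on the operator. Centrality is immediate, since $\tau_{q}(A_{t}^{2})$ is central by the general mechanism of \cite{ligo92}. Self-adjointness holds because $\mathcal{R}^{T}\mathcal{R}$ is a positive diagonal operator, so $A_{t}$ has real spectrum and $A_{t}^{2}$, hence $\mathcal{C}_{t}$, is self-adjoint. Positivity is read off from \eqref{eq:value-c}: each eigenvalue $\chi_{\Lambda_{S}}(\mathcal{C}_{t}) = \sum_{\lambda} a_{t,\lambda}^{2}\, \dim_{q} V(\Lambda_{S}+\lambda)/\dim_{q} V(\Lambda_{S})$ is a sum of non-negative terms (strict positivity fails at most at $\Lambda_{S} = 0$, which is absorbed by the removal of the invertibility assumption noted earlier). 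Although \autoref{prop:res-int} is phrased for $\mathbb{C}_{q}[G]$, the argument applies verbatim to the homogeneous space upon replacing the Peter--Weyl decomposition by the multiplicity-free spherical decomposition and the Haar state by its restriction to $\mathbb{C}_{q}[G/K]$; this gives the equality $\zeta_{\mathcal{C},a}(z) = \zeta_{\mathcal{C}}(z) h(a)$ for $\mathrm{Re}(z) > p$.

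For the second bullet I first identify the spectral dimension. Taking the fixed representation to be the adjoint one, $\Lambda_{0} = \theta$, I would combine \autoref{cor:spec-flag}, which gives $p = \frac{1}{2t}\max\{a_{k}^{S}/b_{k}^{S}\}$, with the two inner-product computations. By \autoref{lemma:cas-asym} we have $(\Lambda_{S},\theta) = (\theta,\theta)\sum_{k} m_{k}$, so $b_{k}^{S} = (\theta,\theta)$ for every $k$; and by \autoref{proposition:qdim-asym} we have $\frac{2}{(\theta,\theta)}\max\{a_{k}^{S}\} = d$. Substituting and setting $t = 1/4$,
\begin{equation*}
p = \frac{1}{2t(\theta,\theta)}\max\{a_{k}^{S}\} = \frac{d}{4t} = d.
\end{equation*}
Holomorphicity of $\zeta_{\mathcal{C}}(z)$ on $\mathrm{Re}(z) > d$ is then immediate from the holomorphicity lemma preceding \autoref{prop:res-int}.

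The remaining and most delicate point is that $z = d$ is \emph{genuinely} a singularity, not merely the infimum of the half-plane of convergence. For this I would return to the explicit series underlying the proofs of \autoref{thm:spec-dim} and \autoref{cor:spec-flag}. Applying the asymptotics of \autoref{lem:qdim} and \autoref{lemma:cas-ineq} to the multiplicity-free decomposition gives, up to the relation $\sim$,
\begin{equation*}
\zeta_{\mathcal{C}}(s) \sim \sum_{\{m\}} q^{\sum_{k} (2 s t\, b_{k}^{S} - a_{k}^{S}) m_{k}},
\end{equation*}
a product of geometric series in the integers $m_{k}$. Since all coefficients are positive and the termwise bounds defining $\sim$ are uniform in the weight, the relation yields a one-sided estimate $\zeta_{\mathcal{C}}(s) \geq K \sum_{\{m\}} q^{\sum_{k}(2stb_{k}^{S} - a_{k}^{S})m_{k}}$ with $K > 0$, up to a finite correction that stays bounded near $s = d$. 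For the index $k$ attaining the maximum defining $p$ one has $a_{k}^{S} = 2dt\,b_{k}^{S}$, so the exponent $2stb_{k}^{S} - a_{k}^{S} = 2tb_{k}^{S}(s-d)$ is positive precisely for $s > d$; using $1 - q^{c} \sim -c\log q$ as $c \to 0^{+}$, the associated factor $\frac{1}{1 - q^{2stb_{k}^{S} - a_{k}^{S}}}$ blows up like $(s-d)^{-1}$ as $s \downarrow d$. This forces $\zeta_{\mathcal{C}}(s) \to \infty$ along the real axis, so $\zeta_{\mathcal{C}}$ cannot continue holomorphically across $z = d$. I expect this to be the main obstacle: since every estimate is only two-sided up to $\sim$, turning a divergence of the comparison series into a genuine singularity of $\zeta_{\mathcal{C}}$ itself requires the positivity of all terms, which licenses the clean one-sided comparison above (alternatively, one may invoke Landau's theorem for generalized Dirichlet series with non-negative coefficients, which guarantees that the real point on the abscissa of convergence is always a singularity).
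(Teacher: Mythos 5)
Your proposal is correct and its skeleton is the same as the paper's: the theorem is assembled from \autoref{prop:res-int} (first bullet), the holomorphicity lemma, and the chain \autoref{cor:spec-flag} $+$ \autoref{lemma:cas-asym} $+$ \autoref{proposition:qdim-asym} giving $p = \frac{1}{2t(\theta,\theta)}\max\{a_k^S\} = \frac{d}{4t}$, so that $t = 1/4$ yields $p = d$ --- exactly the computation the paper performs just before stating the theorem. Where you go beyond the paper is the final point: the paper defines the spectral dimension only as $\inf\{s : \zeta_{\mathcal{C}}(s) < \infty\}$ and then implicitly identifies this infimum with a singular point of $\zeta_{\mathcal{C}}$, even though it had itself noted (in the product-space section) that finiteness of the trace at $s = p$ is conceivable in general. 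Your one-sided comparison --- exploiting that all terms of the series are positive, that for the maximizing index the exponent vanishes at $s = d$ so the comparison series diverges there, and that monotone blow-up of $\zeta_{\mathcal{C}}(s)$ as $s \downarrow d$ rules out any holomorphic (indeed continuous) extension across $z = d$ --- or equivalently the appeal to Landau's theorem for Dirichlet series with non-negative coefficients, legitimately closes this gap; your remarks on checking positivity of $\mathcal{C}_{t}$ via \eqref{eq:value-c} (with the kernel at the trivial representation handled as the paper's invertibility remark allows) and on transporting \autoref{prop:res-int} to the multiplicity-free spherical decomposition are likewise details the paper leaves tacit but which hold as you state them.
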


While we have stated this result for the value $t = 1/4$, other choices might be of interest. For example it is possible to show that for $t = 1/2$ the eigenvalues of $\mathcal{C}_{t}$ take a particularly nice form. In this case the spectral dimension corresponds to the \emph{complex} dimension of $G/K$.
In any case the general result is that, for any value of $t$, the spectral dimension is proportional to the classical one.
It is important to stress that this is valid for \emph{all} quantized irreducible generalized flag manifolds, since of course we can always choose a value of $t$ for one particular space in such a way that the classical dimension is obtained.

The general setting for zeta functions developed in this paper can be explored further. For example one can investigate the nature of the singularity at the spectral dimension and, more generally, the possibility of a meromorphic extension of these functions. In this case the residues at the poles might contain interesting information, as one would expect from the classical setting. Some of these matters will be investigated in a future publication.

\vspace{3mm}

{\footnotesize
\emph{Acknowledgements}. I wish to thank Sergey Neshveyev for a discussion regarding \autoref{prop:res-int}. I am supported by the European Research Council under the European Union's Seventh Framework Programme (FP/2007-2013) / ERC Grant Agreement no. 307663 (P.I.: S. Neshveyev).
I also wish to thank the Hausdorff Research Institute for Mathematics in Bonn and the organizers of the 2014 Trimester Program "Non-commutative Geometry and its Applications", where part of this work was done.

}

\bigskip

\end{document}